\theoremstyle{thmstyleone}%
\newtheorem{theorem}{Theorem}
\newtheorem{proposition}[theorem]{Proposition}%
\theoremstyle{thmstyletwo}%
\theoremstyle{thmstylethree}%
\newtheorem{definition}{Definition}
\theoremstyle{plain}
\theoremstyle{definition}
\begin{document}

\journaltitle{Journals of the Royal Statistical Society}
\DOI{DOI HERE}
\copyrightyear{XXXX}
\pubyear{XXXX}
\access{Advance Access Publication Date: Day Month Year}
\appnotes{Original article}

\firstpage{1}


\title[UMP Tests in Linear Models]{Uniformly most powerful tests in linear models}

\author[1,$\ast$]{Razvan G. Romanescu \ORCID{0000-0002-3175-5399}}

\authormark{Razvan G. Romanescu}

\address[1]{\orgdiv{College of Community and Global Health, Rady Faculty of Health Sciences}, \orgname{University of Manitoba}, \orgaddress{\street{753 McDermot Ave}, \postcode{Winnipeg MB  R3E 0T6}, \state{MB}, \country{Canada}}}

\corresp[$\ast$]{Address for correspondence. 
\href{Email:email-id.com}{Razvan.Romanescu@umanitoba.ca}}

\received{Date}{0}{Year}
\revised{Date}{0}{Year}
\accepted{Date}{0}{Year}



\abstract{In the multiple regression model we prove that the coefficient t--test for a variable of interest is uniformly most powerful unbiased,  with the other parameters considered nuisance. The proof is based on the theory of tests with Neyman--structure and does not assume unbiasedness or linearity of the test statistic.  We further show that the Gram--Schmidt decomposition of the design matrix leads to a family of regression model with potentially more powerful tests for the corresponding transformed regressors. Finally, we discuss interpretation and performance criteria for the Gram--Schmidt regression compared to standard multiple regression, and show how the power differential has major implications for study design.}
\keywords{uniformly most powerful tests in regression,Gram--Schmidt decomposition,multicollinearity, power calculation}


\maketitle

\section{Introduction}
A persistent problem in multiple regression is that correlated predictors leads to loss of power and other issues. In an extreme case, including perfectly correlated predictors leads to a model that is over-identified and cannot be fitted. Even if features are highly, but not perfectly correlated, multicollinearity might make coefficient variances large and point estimates highly sensitive to the paricular values in the design matrix, making  the fit unstable and replication difficult. The amount of multicollinearity is sometimes measured via variance inflation factors (VIFs). Parameters that have high VIF are deemed to significantly increase multicollinearity of the model and are often excluded. This not only results in loss of information, but may also not completely eliminate multicollinearity among the remaining predictors.

Theoretical discussions in multiple regression so far has focused on the properties of the OLS estimator, namely that it is BLUE and  BUE (see \cite{hansen, potscher, portnoy}). This treatment, however, remains within the space of the original regressors and does not address the practical problem of multicollinearity.
Derivative models that attempt do deal with this issue, such as ridge regression, have already been shown to have improved power  compared to the original model, when testing feature coefficients (\cite{halawa}). 

In this paper, the starting point for the treatment of correlation in multiple regression is the question of whether a uniformly most powerful (UMP) test exists for testing the coefficient of a predictor of interest. According to the Lehman--Scheff\'e theorem, any unbiased estimate that depends on the data only via the sufficient statistics is the unique uniformly minimum variance unbiased estimator (UMVUE). A test based on such a quantity would necessarily have better properties compared to a test based on any other unbiased estimate, however, it does not directly follow that this test is UMP. The theory for finding the most powerful test --- when it exists --- is based on different mechanics that do not call for an unbiased estimator at all. In fact, a decision rule used to test hypotheses about a parameter need not be based on an estimate of that parameter. Instead, finding the uniformly most powerful test for a parameter of interest in the presence of nuisance parameters relies on the notion of Neyman--structure of tests with respect to the sufficient statistic, and that of ``unbiased'' tests at level $\alpha$, which has a different meaning related to the distribution of power over the parameter space.
As we will show in the first part of this paper, a t--test for coefficients based on the OLS ends up being the UMP unbiased test  in the multiple regression model; however the path to get to this result is distinct from estimation theory and the UMVUE.

The second part is perhaps more interesting from the point of view of application potential, and starts from the recognition that because a test for a feature of interest is UMPU under one model, this does not stop one from finding a different, related model that offers a more powerful test for the same feature. Standard coefficient tests based on  OLS estimates are  still  plagued by multicollinearity and thus may be severely underpowered, despite being UMPU. Transforming the model variables into an orthonormal set via Gram--Schmidt (GS) decomposition  eliminates the correlation structure among regressors, while keeping a meaningful interpretation of the new features. 
These transformed features were
shown to be consistent with a particular causal diagram in which the direction of causation matches the order in which variables are orthogonalized (\cite{cross}). The GS algorithm itself  traces its origins to Laplace (\cite{laplace}) and is one way to obtain the QR factorization. Gram-Schmidt  regression has been explicitly introduced as such half a century ago \cite{farebrother}, although it remains underused in the statistical sciences.  It has been used in various forms in other fields (see, e.g.,  \cite{clyde, klein, forina}),  especially in Mathematical Chemistry, where it found application particularly in quantitative structure–activity relationship (QSAR) models used to predict the behavior of chemical compounds. Some of the benefits  that have been documented in this line of research include the stability of coefficient estimates when new predictors are added to a regression model, as well as circumventing the problem of multicollinearity (\cite{randic, randic.book}, and others). 
 In Section  \ref{size} we formally compare the GS and multiple regression models in terms of power, and show that the implications for study design are tangible and significant. While the power gains are impressive, interpretation may be key to wider adoption, and in Section \ref{multicoll} we discuss more in depth how to interpret GS results and effect size estimates in the context of multicollinearity and when this model might be more appropriate to use in place of multiple regression.

\section{Conditionally best tests in regression} \label{methods}
 Prior work on building UMPU tests is well established in inference theory, especially for distributions in the single parameter exponential family. The existence of UMP tests in this case is based on the Neyman-Pearson Lemma, and tests can be built by  writing the likelihood ratio as a monotone function of the sufficient statistic. While this approach does not generalize directly to multi-parameter families, UMPU tests can be constructed for one parameter of interest by conditioning on the sufficient statistics for the other (nuisance) parameters.

\subsection{Related work} 
A UMP invariant (UMPI) test for the directional testing of a subset of coefficients being jointly zero, assuming knowledge of the coefficients' signs, has been constructed by \cite{king}. The invariance condition is a somewhat strong  assumption, and this test does not attain the envelope of power, even though it is shown to perform reasonably well in simulations. A UMP test for the variance parameter in regression was derived by \cite{zhang} under a more lenient assumption than unbiasedness. 
The problem of efficient testing in parametric models in the large sample limit has been solved for a general distribution by \cite{choi}, by  using the notions of asymptotically uniformly most powerful (AUMP), and effective scores. However, these are advanced theoretical concepts based on local asymptotic normality, and no simple solution has been derived for multivariate regression, which is an important case in applied statistics. The treatment we consider here is exact as opposed to asymptotic, and, as such works for small samples as well as large. Importantly, we wish to obtain the test in closed form, and establish its link to familiar test statistics from regression analysis.

\subsection{Regression on an orthonormal set of predictors} 

Here we introduce the main result of this section, which concerns the one-sided test of a coefficient in a multiple regression model, where features are orthonormal. The proof generalizes Example 6.9.11 from \cite{bhattacharya} , which establishes the result in the more limited case of testing for the slope in a simple regression model, in which the intercept and error variance are unknown.

\begin{theorem}  \label{thm1}
 Suppose we observe data vector $\mathbf{Y}$  from the multiple regression model $\bm{Y}=\beta_1 \mathbf{x}_{1}+\beta_2 \mathbf{x}_{2}+...+\beta_p \mathbf{x}_{p}+ \bm{\epsilon}$, where $\bm{\epsilon} \sim N(0,\sigma^2 I),$ and  $\mathbf{x_1,x_2,...,x_p}$ are fixed covariates, for $p<n$. Assume further that $\mathbf{x_1,x_2,...,x_p}$ are orthonormal, and all parameters ($\beta_1, \beta_2,...,\beta_p$ and $\sigma^2$) are unknown.  The test $\phi$ defined as 
\begin{equation} \label{phi}
\phi=\begin{cases}
0, \quad  \text{if $V<t_{n-p,1-\alpha}$} \\
1, \quad  \text{if $V\geq t_{n-p,1-\alpha}$,}
\end{cases}
\end{equation}
where $V=\frac{\sqrt{n-p}\, \mathbf{x}_p^\intercal \mathbf{Y}}{\sqrt{\mathbf{Y}^\intercal \mathbf{Y} -  \sum_{i=1}^p{(\mathbf{x}_i^\intercal \mathbf{Y})^2} }} \sim t_{n-p}$
is UMPU for testing $H_0: \beta_p \leq 0$ vs $H_1: \beta_p > 0$.
\end{theorem}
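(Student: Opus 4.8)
The plan is to place the Gaussian model in multiparameter exponential-family form and apply the classical theory of UMPU tests built on Neyman structure. First I would expand the likelihood: since $\bm{\epsilon} \sim N(0,\sigma^2 I)$ and the $\mathbf{x}_i$ are orthonormal, $\|\mathbf{y}-\sum_i \beta_i \mathbf{x}_i\|^2 = \mathbf{y}^\intercal\mathbf{y} - 2\sum_i \beta_i\,\mathbf{x}_i^\intercal\mathbf{y} + \sum_i \beta_i^2$, so the density factors as an exponential family with natural parameter of interest $\theta=\beta_p/\sigma^2$ paired to the statistic $T=\mathbf{x}_p^\intercal\mathbf{Y}$, and nuisance statistics $U=(\mathbf{x}_1^\intercal\mathbf{Y},\ldots,\mathbf{x}_{p-1}^\intercal\mathbf{Y},\mathbf{Y}^\intercal\mathbf{Y})$ paired to the remaining natural parameters $\beta_1/\sigma^2,\ldots,\beta_{p-1}/\sigma^2,-1/(2\sigma^2)$. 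Because $\sigma^2>0$, the null $\beta_p\le 0$ is exactly $\theta\le 0$, so this is a one-sided testing problem for a single natural parameter with the remainder treated as nuisance.

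Next I would invoke the general result that, for such a one-sided problem, the UMPU test rejects for large values of $T$ conditionally on $U$, with critical function chosen so that the conditional size given $U=u$ equals $\alpha$ at $\theta=0$ (Neyman structure). Completeness of $U$ in the boundary family $\{\theta=0\}$ --- which holds here because the associated Gaussian natural-parameter set $\mathbb{R}^{p-1}\times(-\infty,0)$ has nonempty interior --- guarantees that every similar test has Neyman structure, and the existence of a conditionally most powerful one-sided test then upgrades to UMPU. At this stage the UMPU test is characterized, but only implicitly through the conditional distribution of $T$ given $U$.

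The remaining work, and the technical heart of the argument, is to show that this conditional test coincides with the stated $t$-test at the fixed critical value $t_{n-p,1-\alpha}$. I would complete $\mathbf{x}_1,\ldots,\mathbf{x}_p$ to an orthonormal basis $\mathbf{x}_1,\ldots,\mathbf{x}_n$ and set $Z_i=\mathbf{x}_i^\intercal\mathbf{Y}$, so the $Z_i$ are independent $N(\beta_i,\sigma^2)$ for $i\le p$ and $N(0,\sigma^2)$ for $i>p$. Then $T=Z_p$, the residual sum of squares is $\mathbf{Y}^\intercal\mathbf{Y}-\sum_{i=1}^p(\mathbf{x}_i^\intercal\mathbf{Y})^2=\sum_{i>p}Z_i^2$, and conditioning on $U$ fixes $W:=Z_p^2+\sum_{i>p}Z_i^2$. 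Writing $V=\sqrt{n-p}\,T/\sqrt{W-T^2}$ exhibits $V$ as a strictly increasing function of $T$ on the admissible range $|T|<\sqrt{W}$, so ``reject for large $T$'' is equivalent to ``reject for large $V$''.

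Finally I would identify the conditional null law of $V$. Under $\theta=0$ the vector $(Z_p,Z_{p+1},\ldots,Z_n)$ is i.i.d.\ $N(0,\sigma^2)$, so conditioning on its squared radius $W$ makes it uniform on the sphere of radius $\sqrt{W}$; consequently the conditional distribution of $V$ is that of $\sqrt{n-p}$ times one sphere coordinate divided by the norm of the remaining $n-p$ coordinates, which is exactly $t_{n-p}$ and is free of $u$. Hence the single constant $t_{n-p,1-\alpha}$ delivers conditional size $\alpha$ for every $u$, no randomization is needed since the conditional law is continuous, and the monotone correspondence between $V$ and $T$ shows that $\phi$ is precisely the UMPU test. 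I expect the main obstacle to be this last step: verifying that the conditional distribution of the $t$-statistic given the nuisance-sufficient statistic is $t_{n-p}$ and, crucially, independent of the conditioning value, which is what licenses a fixed critical value in place of a $u$-dependent one.
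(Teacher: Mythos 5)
Your proposal is correct, and its overall architecture matches the paper's: both reduce the Gaussian model to multiparameter exponential-family form with natural parameter $\beta_p/\sigma^2$, both complete $\mathbf{x}_1,\ldots,\mathbf{x}_p$ to an orthonormal basis so that the projections (your $Z_i$, the paper's $B_i$ and $W_i$) become independent normals with $R=\mathbf{Y}^\intercal\mathbf{Y}-\sum_{i\le p}(\mathbf{x}_i^\intercal\mathbf{Y})^2\sim\sigma^2\chi^2_{n-p}$, and both verify that $V$ is a strictly increasing function of the interest statistic for each fixed value of the nuisance statistics. The one substantive divergence is the step you rightly call the technical heart --- justifying a fixed, $u$-free critical value. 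The paper handles it abstractly: since $V\sim t_{n-p}$ at $\beta_p=0$ for \emph{all} values of the nuisance parameters, $V$ is ancillary for the boundary family, while the nuisance statistics are complete sufficient for it, so Basu's theorem gives $V\perp\bm{T}$, and the conditions of Bhattacharya's Theorem 6.9.2 are met without ever computing a conditional law. You instead compute the conditional law directly: given the nuisance statistics, $(Z_p,Z_{p+1},\ldots,Z_n)$ is uniform on a sphere under $\theta=0$, and since $V$ depends only on the direction of this vector its conditional distribution is exactly $t_{n-p}$, free of the conditioning value. Your geometric route is self-contained (it effectively re-proves the lemma the paper cites) and simultaneously delivers the independence-type conclusion and the critical value; the paper's Basu route is shorter and avoids conditional-distribution calculations, at the cost of leaning on completeness plus ancillarity machinery. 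Both arguments are sound, and your flagged ``main obstacle'' --- that the conditional $t_{n-p}$ law is free of $u$ --- is resolved correctly by the scale-invariance of $V$ on the sphere.
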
 

\begin{proof} 
As is typical when looking for a UMP test in the presence of nuisance parameters, we first wish to identify sufficient statistics for this inference. With normal data, the joint density will belong to the exponential family and can be written thus (here, $\bm{x}_{q,i}$ is the $i$--th component of vector $\bm{x}_q$)
\begin{align} \label{joint_lik}
f(\mathbf{Y}|\bm{\beta}, \sigma) &=\prod_{i=1}^{n}{ \frac{1}{\sqrt{2\pi}\sigma}\exp \left\{-\frac{(Y_i - \sum_{q=1}^{p}{\beta_q \bm{x}_{q,i}})^2}{2\sigma^2} \right\} } \nonumber \\
&= \frac{1}{(\sqrt{2\pi})^n\sigma^n}\exp \left\{ -\frac{\sum_{i=1}^{n}{Y_i^2}}{2\sigma^2} - \frac{\sum_{i=1}^{n}{(\sum_{q=1}^{p}{\beta_q \bm{x}_{q,i}})^2}}{2\sigma^2} +\frac{\sum_{i=1}^{n}{(Y_i \sum_{q=1}^{p}{\beta_q \bm{x}_{q,i}})}}{\sigma^2} \right\}  \nonumber \\
&= h(\bm{\beta}, \sigma) \exp \left\{ -\frac{\mathbf{Y}^\intercal \mathbf{Y}}{2\sigma^2} + \frac{\beta_1}{\sigma^2} \mathbf{x}_1^\intercal \mathbf{Y} + \frac{\beta_2}{\sigma^2} \mathbf{x}_2^\intercal \mathbf{Y} +...+ \frac{\beta_p}{\sigma^2} \mathbf{x}_p^\intercal \mathbf{Y}\right\}
\end{align}
From this, the sufficient statistics are $(\mathbf{Y}^\intercal \mathbf{Y}, \mathbf{x}_1^\intercal \mathbf{Y}, \mathbf{x}_2^\intercal \mathbf{Y},..., \mathbf{x}_p^\intercal \mathbf{Y}   )$ corresponding to the natural parameter vector $(-\frac{1}{2\sigma^2}, \frac{\beta_1}{\sigma^2},...,\frac{\beta_p}{\sigma^2})$. According to \cite{bhattacharya} (pp. 147-148) there exists an unbiased UMP test $\phi_1(u,\bm{t}) = I\{u \geq c_1(\bm{t})\}$ where $c_1(\bm{t})$ is determined from $E_{\beta_p=0}[\phi_1(U, \bm{T})|\bm{T=t}]=\alpha$, where $U,\bm{T}$ are the sufficient statistics for the important and nuisance parameters, respectively. The problem is that the joint conditional distribution $(U, \bm{T})|\bm{T=t}$ is not yet straightforward to obtain as $U= \mathbf{x}_p^\intercal \mathbf{Y} $ is not entirely independent of  $\bm{T}$. In what follows, the plan is to use Theorem 6.9.2 part A from \cite{bhattacharya}, which gives some relatively simpler conditions for a test to attain UMPU property, and is especially suited when data is normal.

Our objective now is to find a simpler characterization for the distribution of the sufficient statistics. Following and extending the reasoning in the aforementioned Example 6.9.11, let $\mathbf{a_1, a_2,...,a_n}$ be an orthonormal basis for $\mathbb{R}^n$ that includes the covariate vectors, i.e., $\mathbf{a}_1=\mathbf{x}_1, \mathbf{a}_2=\mathbf{x}_2, ...,\mathbf{a}_p=\mathbf{x}_p $; the other vectors $\mathbf{a}_{p+1},...,\mathbf{a}_n$ are chosen such that $\mathbf{a}_i^\intercal \mathbf{a}_i = 1$, for all i from $p+1$ to $n$, and $\mathbf{a}_i^\intercal \mathbf{a}_j = 0$ when $i \neq j$. Further define $W_i = \mathbf{a}_i^\intercal \mathbf{\epsilon}, \forall i$. It is relatively straightforward to show that $W_1,W_2,...,W_n$ are iid $N(0,\sigma^2)$. We also have that $\sum_{i}{W_i^2}=\sum_{i}{\epsilon_i^2}$. This is true because  
$W_i$ is the length of the projection of the error vector $\bm{\epsilon}$ on basis vector  $\mathbf{a_i}$, and we express the squared length of vector $\mathbf{\epsilon}$ in both coordinate bases.

In the regression model, we can identify the best fit parameters $\beta_i, i=1,..,p$ as the projection of data vector $\mathbf{Y}$ onto covariate directions $\mathbf{x}_i=\mathbf{a}_i$. Let us call the corresponding estimators $B_i=\mathbf{a}_i^\intercal \mathbf{Y}=\mathbf{a}_i^\intercal (\beta_1 \mathbf{a}_1+...\beta_p \mathbf{a}_p+\mathbf{\epsilon})=\beta_i+W_i$. The residual sum of squares is $R=\sum_{i=p+1}^{n}{W_i^2}=\sum_{i=1}^{n}{\epsilon_i^2} - \sum_{i=1}^{p}{W_i^2}=\sum_{i=1}^{n}{(Y_i-\beta_1 a_{1,i}-\beta_2 a_{2,i}-...-\beta_p a_{p,i})^2} -\sum_{i=1}^{p}{(B_i -\beta_i)^2}$. The first sum expands to $\mathbf{Y}^\intercal \mathbf{Y}-2\sum_{i=1}^{p}{\beta_i \mathbf{a}_i^\intercal \mathbf{Y}} + \sum_{i=1}^{p}{\beta_i^2}$. It is then easy to obtain that  $R=\mathbf{Y}^\intercal \mathbf{Y}-\sum_{i=1}^{p}{B_i^2} \sim \sigma^2\chi_{n-p}^2$, from the original definition of $R=\sum_{i=p+1}^{n}{W_i^2}$.

To recapitulate, we found summary statistics $B_i \sim N(\beta_i, \sigma^2), i=1,..,p$, and $R$, which are all mutually independent. Plugging these into Equation \ref{joint_lik}, we have
\begin{equation}
f(\mathbf{Y}|\bm{\beta}, \sigma)= h(\bm{\beta}, \sigma) \exp \left\{ -\frac{R+ \sum_{i=1}^{p}{B_i^2}}{2\sigma^2} + \frac{\beta_1}{\sigma^2}B_1 + \frac{\beta_2}{\sigma^2} B_2 +...++ \frac{\beta_p}{\sigma^2} B_p \right\}.
\end{equation}
From this, we see that statistics $(U,T_1,...,T_p):=(B_p, B_1, ...,B_{p-1},R+ \sum_{i=1}^{p}{B_i^2})$ are  sufficient for $(\frac{\beta_p}{2\sigma^2}, \frac{\beta_1}{2\sigma^2},...,\frac{\beta_{p-1}}{2\sigma^2}, -\frac{1}{2\sigma^2})$. Next, define a new variable $V=g(U,T_1,T_2,...,T_p)$ as
\[
V=\frac{U}{\sqrt{\frac{T_p-T_1^2-T_2^2-...-T_{p-1}^2-U^2}{n-p}}} =\frac{B_p}{\sqrt{\frac{R}{n-p}}},
\]
and check that $V$ satisfies the conditions of Theorem 6.9.2, namely
\begin{enumerate}
\item $V$ is independent of $\bm{T}=(T_1,...,T_p)$ when $\beta_p/\sigma^2=0$. As $B_p \sim N(0,\sigma^2)$ when $\beta_p=0$, we get $V \sim t_{n-p} $. As the distribution of $V$ does not depend on any of the other parameters $(-\frac{1}{2\sigma^2}, \frac{\beta_1}{2\sigma^2},...,\frac{\beta_{p-1}}{2\sigma^2})$, it follows from Corollary 5.1.1 to Basu's Theorem in \cite{lehmann} that $V$ is independent of $\bm{T}$.
\item $g(u,\bm{t})$ is increasing in $u$ for each $\bm{t}$. It is easy to show $\frac{\partial g}{\partial u}>0$ for any value of  $\bm{t}$.
\end{enumerate}

Therefore, we can conclude that an UMP unbiased test for $\beta_p/\sigma^2 \leq 0$ vs $\beta_p/\sigma^2 > 0$, which is equivalent to testing $H_0$ vs $H_1$ is 
\[
\phi(v)=\begin{cases}
0, \quad  \text{if $v<c$} \\
\text{$\xi$}, \quad  \text{if $v=c$} \\
1, \quad  \text{if $v> c$},
\end{cases}
\]
where $c$ and $\xi$ are determined by $E_{\beta_p = 0}[\phi(V)]=\alpha$. Ignoring the middle case ($V=c$) which has probability zero, this means $P_{\beta_p = 0}(V>c)=\alpha$, i.e., $c=t_{n-p,1-\alpha}. $
\end{proof}

We observe that test statistic $V$ is identical to the test of coefficient $\beta_p$ being significantly different from zero. This t--test is standard output when fitting a multiple regression in most statistical software packages. This identification can be seen by writing  $V=\frac{ \hat{\beta}_p }{\sqrt{SSE /(n-p)}}=\frac{ \hat{\beta}_p }{\text{s.e.}( \hat{\beta}_p)}$, which is the Student$-$t test statistic for coefficient $\beta_p$ . Here we have used the fact that $\text{s.e.}( \hat{\beta}_p) = \sqrt{s^2 (X^\intercal X)^{-1}_{pp}}=\sqrt{s^2 I_{pp}} = s$.
The degrees of freedom are also the same: since we have considered the intercept to be one of the predictors, we would have $p-1$ ``predictor variables'' in the standard textbook formulation of the model, so the degrees of freedom associated with the sum of squares $SSE$ would be $n-p$, the same as in the previous Theorem.


\subsection{Transforming the predictor set via Gram--Schmidt}\label{GS}
The next question to ask  is whether the previous result generalizes  for correlated predictors.
A key property in Theorem \ref{thm1}  was that the estimate of the coefficient of interest did not depend on the other features; this will not be the case under correlations. 
However, the model hyperplane, i.e., the span of all features, can be built  using an orthogonal basis, which 
reduces the conditions to that of Theorem \ref{thm1}. 
This is what  the Gram--Schmidt algorithm does, which we describe next. 
The specific implementation we use to orthogonalize a set of $p$ features $\bm{m}_1,..., \bm{m}_p$ is summarized in Algorithm \ref{algo}.  

\begin{algorithm} 
\caption{(A variant of) the Gram--Schmidt algorithm to orthogonalize a feature set around the first direction.}
\begin{algorithmic}[1]
    \State Fix the first basis vector to $\bm{x}_1  = \frac{\bm{m}_1}{||\bm{m}_1||}$, where  $\bm{m}_1$ is the feature of interest
        \For{$k \gets 2$ \textbf{to} $p$}
            \State  Regress the $\bm{m}_k$-th predictor on the basis vectors obtained so far, i.e.,  $\bm{m}_k = \alpha_{k,1}\bm{x}_1 + ...+ \alpha_{k,k-1}\bm{x}_{k-1}+ \bm{r}_k$ 
            \State Set the next basis vector, $\bm{x}_k$, as the component of $\bm{m}_k$ orthogonal to $\bm{x}_1,..., \bm{x}_{k-1}$, i.e.,   $\bm{x}_k  = \frac{\bm{\hat{r}}_k}{||\bm{\hat{r}}_k||}$
            \State  Compute the $k$-th column of matrix $Q$ as $(\hat{\alpha}_{k,1}, \hat{\alpha}_{k,2}, \hat{\alpha}_{k,k-1}, ||\bm{\hat{r}}_k||, 0,..,0)^\intercal$
        \EndFor
\end{algorithmic}
\label{algo}
\end{algorithm}

Essentially, Gram--Schmidt solves for an upper triangular matrix $Q$ which transforms the original set of features into an orthogonal set, such that 
$$ (\bm{m}_1, \bm{m}_2, \bm{m}_3,..., \bm{m}_p)= (\bm{m}_1^\perp, \bm{m}_2^{\perp \,1}, \bm{m}_3^{\perp (1,2)},..., \bm{m}_p^{\perp (1,...,p-1)}) Q=XQ,$$
where we have used the notation $ (\bm{m}_1^\perp, \bm{m}_2^{\perp \,1}, \bm{m}_3^{\perp (1,2)},..., \bm{m}_p^{\perp (1,...,p-1)}) \triangleq ( \bm{x}_1,  \bm{x}_2,..., \bm{x}_p).$
From the point of view of interpretation, it is important to note that the meaning of the original predictors is partly preserved, as opposed to other algorithms (such as principal components) where the new directions may not be meaningfully related to the original features. In this case, each  basis vector of the new predictor set represents an ``innovation", or remainder, that could not be explained by the previous basis vectors. As a concrete example, if we were regressing some overall health score on age first, then smoking status, the coefficients of the new terms $\text{age}^\perp$ and  $\text{smoking}^{\perp \text{age}}$ would capture, respectively: (i) the unconditional marginal association with age, including direct and indirect effects --- this would be identical to a marginal regression on age alone; and (ii) any residual association between smoking and health, over and above the effects of age. 
It is obvious that the interpretation of all the new terms except for the first one is dependent on the sequence of orthogonalization. More on the importance of ordering will be discussed in Section \ref{multicoll}.

\subsection{Multiple regression on correlated predictors}\label{multiple}
Equipped with the ability to find an equivalent, orthogonal basis for predictors, we can now prove that the more general result for correlated independent variables.

 \begin{theorem}\label{thm2}
A one--sided coefficient t--test based on the OLS estimate in multiple regression is UMPU. 
\end{theorem}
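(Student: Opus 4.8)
The plan is to reduce the correlated case to the orthonormal case of Theorem \ref{thm1} via the Gram--Schmidt decomposition of Section \ref{GS}, exploiting the fact that a reparametrization \emph{within} the column span of the design leaves the OLS coefficient t--statistic unchanged. Write the model as $\bm{Y}=\gamma_1\bm{m}_1+\dots+\gamma_p\bm{m}_p+\bm{\epsilon}$ and, after relabelling, let $\bm{m}_p$ be the feature of interest, so the hypotheses are $H_0:\gamma_p\le 0$ versus $H_1:\gamma_p>0$. First I would apply the orthogonalization of Algorithm \ref{algo}, but ordered so that $\bm{m}_p$ is processed \emph{last}: this yields an orthonormal set $\bm{x}_1,\dots,\bm{x}_p$ in which $\bm{x}_p=\hat{\bm{r}}_p/\lVert\hat{\bm{r}}_p\rVert$ is the normalized residual of $\bm{m}_p$ regressed on the remaining predictors, together with an upper--triangular $Q$ satisfying $M=XQ$ for $M=(\bm{m}_1,\dots,\bm{m}_p)$ and $X=(\bm{x}_1,\dots,\bm{x}_p)$.

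Next I would pass to the equivalent orthonormal model $\bm{Y}=\beta_1\bm{x}_1+\dots+\beta_p\bm{x}_p+\bm{\epsilon}$ with $\bm{\beta}=Q\bm{\gamma}$. Because $Q$ is upper triangular, its last row gives $\beta_p=Q_{pp}\gamma_p=\lVert\hat{\bm{r}}_p\rVert\,\gamma_p$, and since $\lVert\hat{\bm{r}}_p\rVert>0$ under no perfect collinearity, the events $\{\gamma_p\le 0\}$ and $\{\beta_p\le 0\}$ coincide. The orthonormal model meets every hypothesis of Theorem \ref{thm1}, so the test $\phi$ of \eqref{phi} built on $V=\hat\beta_p/s$ is UMPU for $H_0:\beta_p\le 0$, and the null distribution is $t_{n-p}$.

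The crux is then to verify that this test is \emph{literally} the ordinary OLS coefficient t--test for $\gamma_p$ in the original correlated model, not merely a test equivalent to it. Two facts accomplish this. Since $Q$ is invertible, $\mathrm{span}(M)=\mathrm{span}(X)$, so the two models share identical fitted values and residuals; hence the residual variance estimate $s^2=\mathrm{SSE}/(n-p)$ is common to both. And by the Frisch--Waugh--Lovell identity, $\hat\gamma_p=\hat{\bm{r}}_p^\intercal\bm{Y}/\lVert\hat{\bm{r}}_p\rVert^2$ with $\mathrm{s.e.}(\hat\gamma_p)=s/\lVert\hat{\bm{r}}_p\rVert$, whence $\hat\gamma_p/\mathrm{s.e.}(\hat\gamma_p)=\bm{x}_p^\intercal\bm{Y}/s=\hat\beta_p/s=V$. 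The two t--statistics, their $t_{n-p}$ null laws, and their one--sided rejection regions therefore coincide, and the UMPU property transfers verbatim.

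I expect the \textbf{main obstacle} to be exactly this last invariance step: one must show the two t--statistics are \emph{equal} rather than merely monotonically related, since only equality lets the optimality of Theorem \ref{thm1} carry over without re-deriving the critical value, and it is what pins down the standard software t--test as the object being certified. A secondary point worth stating carefully is that the reduction requires the feature of interest to be orthogonalized \emph{last}, so that $\beta_p$ is a positive scalar multiple of $\gamma_p$; this captures the adjusted, within--model effect and is deliberately distinct from the marginal effect $\beta_1$ that the Gram--Schmidt regression of Section \ref{size} tests instead.
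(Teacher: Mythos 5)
Your proposal is correct, but it reaches Theorem \ref{thm2} by a genuinely different route than the paper. You orthogonalize the predictor of interest \emph{last}, so that the upper--triangularity of $Q$ gives $\beta_p = Q_{pp}\gamma_p$ with $Q_{pp} = \lVert\hat{\bm{r}}_p\rVert > 0$; since $Q$ is invertible the two parametrizations describe the same family of distributions with the same null/alternative partition, so the UMPU property of the Theorem \ref{thm1} test transfers to the original problem, and the Frisch--Waugh--Lovell identity $(M^\intercal M)^{-1}_{pp} = \lVert\hat{\bm{r}}_p\rVert^{-2}$ pins down the numerical equality $\hat{\gamma}_p/\mathrm{s.e.}(\hat{\gamma}_p) = \hat{\beta}_p/s$, so the certified test is literally the software t--test. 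The paper instead places the predictor of interest \emph{first} in the Gram--Schmidt ordering (matching Algorithm \ref{algo}), stays in the original $\alpha$--parametrization with sufficient statistic $U = \bm{m}_1^\intercal\bm{Y}$ for the parameter of interest, and re-verifies the conditions of Bhattacharya's Theorem 6.9.2 from scratch: that $V$ is a function of $(U,\bm{T})$, that its boundary distribution is nuisance-free (via $\hat{\alpha}_1 = \bm{q}_1^\intercal\hat{\bm{\beta}}$), and --- the bulk of the proof --- that $V$ is increasing in $U$ for fixed $\bm{T}$, through an explicit partial-derivative computation ending in $SSE + \left(U\lVert\bm{q}_1\rVert + \bm{q}_1^\intercal\bm{w}_{\bm{T}}/\lVert\bm{q}_1\rVert\right)^2 > 0$. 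Your reduction is more economical: the monotonicity and conditional-independence checks are inherited from the orthonormal case, where they are trivial. What it costs is reliance on two imported facts that you should make explicit rather than implicit --- that UMPU is a property of the testing problem and hence invariant under a bijective reparametrization that maps null to null (equality of the null \emph{events} alone is not quite the full statement; one needs the class of level-$\alpha$ unbiased tests and the power functions to coincide, which they do), and FWL in place of the paper's self-contained algebra. What the paper's longer computation buys is intermediate machinery --- $\hat{\bm{\alpha}} = Q^{-1}\hat{\bm{\beta}}$, $\mathrm{Var}(\hat{\bm{\alpha}}) = \sigma^2 Q^{-1}(Q^{-1})^\intercal$, and the expression of $V$ in terms of $(U,\bm{T})$ --- that is reused directly in the power comparison of Theorem \ref{thm5} and Section \ref{size}, where the variable of interest is conventionally first; your last-position reduction, while cleaner for optimality alone, would not feed those later results.
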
  
\begin{proof}  We follow the same proof as in Theorem \ref{thm1} by constructing the GS decomposition of the design matrix M  (assuming the first column holds the predictor of interest) which leads us to reparameterize the original model
\begin{align*}
\bm{Y} &=\alpha_1 \mathbf{m}_{1}+\alpha_2 \mathbf{m}_{2}+...+\alpha_p \mathbf{m}_{p}+ \bm{\epsilon} \qquad  \tag{A}
\text{as}\\
\bm{Y} &=\beta_1 \mathbf{x}_{1}+\beta_2 \mathbf{x}_{2}+...+\beta_p \mathbf{x}_{p}+ \bm{\epsilon} \tag{B}
\end{align*}
where $\bm{\epsilon} \sim N(0,\sigma^2 I)$ and  $M=XQ$, with $X$ orthonormal, and $Q$ upper triangular. To see the connection between the two sets of parameters, write model $A$ as  
\begin{equation} \label{model:alpha}
\bm{Y}= (\bm{m}_1, \bm{m}_2, \bm{m}_3,..., \bm{m}_p) \bm{\alpha}+ \bm{\epsilon}= XQ \bm{\alpha} + \bm{\epsilon}.
\end{equation}
Putting $\bm{\beta}=Q\bm{\alpha}$ we see this to be equivalent to model  $B$, which is written in terms of parameters $\bm{\beta}$.
The ordinary least squares estimate for  $\bm{\alpha}$ is 
\begin{align*}
\hat{\bm{\alpha}} &=[(XQ)^\intercal XQ]^{-1} (XQ)^\intercal \bm{Y} =[Q^\intercal (X^\intercal X)Q]^{-1}Q^\intercal X^\intercal \bm{Y}\\
&=Q^{-1}(Q^\intercal)^{-1} Q^\intercal X^\intercal \bm{Y} =Q^{-1} X^\intercal \bm{Y}=Q^{-1} \hat{\bm{\beta}},
\end{align*}
where we have used that the $p \times p$ matrix $Q$ is full rank.

Writing the likelihood for A in a similar way as (\ref{joint_lik}), we have
\begin{align} \label{jjjjj}
f(\mathbf{Y}|\bm{\alpha}, \sigma) &= \prod_{i=1}^{n}{ \frac{1}{\sqrt{2\pi}\sigma}\exp \left\{-\frac{(Y_i - \sum_{q=1}^{p}{\alpha_q \bm{m}_{q,i}})^2}{2\sigma^2} \right\} } 
&= h(\bm{\alpha}, \sigma) \exp \left\{ -\frac{\mathbf{Y}^\intercal \mathbf{Y}}{2\sigma^2} + \frac{1}{\sigma^2} \bm{\alpha}^\intercal M^\intercal \mathbf{Y} \right\}.
\end{align}
Call the sufficient statistics $(U, T_1, ...,T_p)=( \mathbf{m}_1^\intercal \mathbf{Y} , \mathbf{m}_2^\intercal \mathbf{Y},..., \mathbf{m}_{p}^\intercal \mathbf{Y} , \mathbf{Y}^\intercal \mathbf{Y}  )$ corresponding to the natural parameter vector $(-\frac{1}{2\sigma^2}, \frac{\alpha_1}{\sigma^2},...,\frac{\alpha_p}{\sigma^2})$. Define 
\[
V= \frac{\hat{\alpha}_1}{\sqrt{\frac{\mathbf{Y}^\intercal \mathbf{Y} - ||M \hat{\bm{\alpha}}||^2}{n-p}}} =\frac{(M^\intercal M)^{-1}_{1*}M^\intercal \mathbf{Y}}{\sqrt{\frac{\mathbf{Y}^\intercal \mathbf{Y} - ||M (M^\intercal M)^{-1}M^\intercal \mathbf{Y}||^2}{n-p}}}.
\]
Putting $M^\intercal \mathbf{Y} = (U, T_1, ..., T_{p-1})^\intercal$, it is easy to see that $V$ is a function of  $(U, T_1, ...,T_p)$. To find the distribution of $V$, notice that the numerator can be written as 
\[
\hat{\alpha}_1=\bm{q}_1^\intercal \hat{\bm{\beta}}= \bm{q}_1^\intercal (B_1,...,B_p)^\intercal \sim N(\bm{q}_1^\intercal \bm{\beta}, \frac{\sigma^2}{n-p}||\bm{q}_1||^2),
\]
 according to the proof of Theorem \ref{thm1}. As well, the denominator is still $\sqrt{\frac{R}{n-p}}=\sqrt{\frac{\sigma^2 \chi^2_{n-p}}{n-p}} $, which makes 
\[
V \sim \frac{||\bm{q}_1||}{\sqrt{n-p}} t_{n-p}
\]
at the boundary point $\alpha_1 = \bm{q}_1^\intercal \bm{\beta} = 0$. Thus, the distribution of $V$ is independent of the nuisance parameters.

Secondly, to show that $V$ is an increasing function of $U$ for each $\bm{T}$, we show how the numerator and  denominator depend on $U$. 
For the numerator, we have
\begin{align*}
\hat{\alpha}_1 &= [(M^\intercal M)^{-1}]_{1*} M^\intercal \mathbf{Y}=[Q^{-1}(Q^{-1})^\intercal]_{1*} M^\intercal \mathbf{Y}=\bm{q}_1^\intercal (Q^{-1})^\intercal 
(U, T_1, ..., T_{p-1})^\intercal \\
&= ||\bm{q}_1||^2 U + T_1 \bm{q}_1\cdot \bm{q}_2 + ...+T_{p-1} \bm{q}_1\cdot \bm{q}_p = ||\bm{q}_1||^2 U +  \bm{q}_1\cdot \bm{w}_{\bm{T}},
\end{align*}
where we have defined $\bm{w}_{\bm{T}}=T_1  \bm{q}_2 + ...+T_{p-1} \bm{q}_p$.
Next, we can write the $SSE$ in the numerator as
\begin{align*}
SSE &= \mathbf{Y}^\intercal \mathbf{Y} - ||M (M^\intercal M)^{-1}M^\intercal \mathbf{Y}||^2 = \mathbf{Y}^\intercal \mathbf{Y} - \mathbf{Y}^\intercal M (M^\intercal M)^{-1}M^\intercal M (M^\intercal M)^{-1}M^\intercal \mathbf{Y} \\
&= \mathbf{Y}^\intercal \mathbf{Y} - \mathbf{Y}^\intercal M (M^\intercal M)^{-1}M^\intercal \mathbf{Y} =
\mathbf{Y}^\intercal \mathbf{Y} - (U, T_1, ...,T_{p-1}) Q^{-1}(Q^{-1})^\intercal
\begin{pmatrix} 
 U \\ 
 T_1 \\ 
 \vdots \\
 T_{p-1}
\end{pmatrix}  \\
&=  \mathbf{Y}^\intercal \mathbf{Y} - (U \bm{q}_1^\intercal + T_1 \bm{q}_2^\intercal +...+ T_{p-1} \bm{q}_p^\intercal)  (U \bm{q}_1 + T_1 \bm{q}_2 +...+ T_{p-1} \bm{q}_p) \\
&=  \mathbf{Y}^\intercal \mathbf{Y} - U^2 ||\bm{q}_1||^2 - 2 U \bm{q}_1 \cdot   \bm{w}_{\bm{T}} - ||  \bm{w}_{\bm{T}}||^2.
\end{align*}

Thus, $V= \frac{||\bm{q}_1||^2 U +  \bm{q}_1\cdot \bm{w}_{\bm{T}}}{\sqrt{T_p - U^2 ||\bm{q}_1||^2 - 2 U \bm{q}_1 \cdot   \bm{w}_{\bm{T}} - ||  \bm{w}_{\bm{T}}||^2}} \sqrt{n-p}.$ Using the following shorthand notations:  $a=||\bm{q}_1||^2,\ b= \bm{q}_1 \cdot   \bm{w}_{\bm{T}},$ and $d= ||  \bm{w}_{\bm{T}}||^2$  we can check the sign of the partial derivative, assuming $T_i$ as constant:
\begin{align*}
& \frac{\partial V}{\partial U}=  \sqrt{n-p} \frac{a(T_p - a U^2 - 2b U  - d)^{1/2} - \frac{1}{2} (a U + b)(T_p - a U^2 - 2b U  - d)^{-1/2} (-2 a U -2 b) }{T_p - a U^2 - 2b U  - d} > 0\\
\iff & a(T_p - a U^2 - 2b U  - d)^{1/2}  >\frac{1}{2} (a U + b)(T_p - a U^2 - 2b U  - d)^{-1/2} (-2 a U - 2b)\\
\iff & 2a(T_p - a U^2 -2 b U  - d) > -2a^2 U^2 - 2abU - 2abU- 2b^2\\
\iff & 2aT_p - 4ab U  - 2ad > -  4abU- 2b^2 \iff 2aT_p - 2ad +2b^2 > 0.
\end{align*}
This means
\begin{equation} \label{ineq}
T_p ||\bm{q}_1||^2 -  ||\bm{q}_1||^2 ||\bm{w}_{\bm{T}}||^2 +  \bm{q}_1^\intercal \bm{w}_{\bm{T}} \bm{q}_1^\intercal \bm{w}_{\bm{T}} >0 \iff T_p  -  ||\bm{w}_{\bm{T}}||^2 + \frac{(\bm{q}_1^\intercal \bm{w}_{\bm{T}} )^2}{||\bm{q}_1||^2} > 0. 
\end{equation}
From the expression for $SSE$ above, we can write $T_p  -  ||\bm{w}_{\bm{T}}||^2 =SSE + U^2 ||\bm{q}_1||^2 + 2 U \bm{q}_1 \cdot   \bm{w}_{\bm{T}}$. Substituting this into \ref{ineq}, the condition becomes
\begin{align*}
& SSE + U^2 ||\bm{q}_1||^2 + 2 U \bm{q}_1^\intercal   \bm{w}_{\bm{T}} +\frac{(\bm{q}_1^\intercal \bm{w}_{\bm{T}} )^2}{||\bm{q}_1||^2} > 0 \\
\iff & SSE +\left( U ||\bm{q}_1|| + \frac{\bm{q}_1^\intercal \bm{w}_{\bm{T}} }{||\bm{q}_1||} \right)^2 > 0, 
\end{align*}
which is true for any non-zero-fit model.
\end{proof} 

This ends the first part of the paper, where we have established that coefficient t-tests in regression are UMP. To the author's knowledge, this has not been done formally before. The results are hardly surprising, though, because we know that estimates $\hat{\bm{\alpha}}$ are BUE, thus, they are unbiased and have minimum variance among all unbiased estimators. Within this class (of unbiased estimators), t -- tests based on the OLS estimators will thus be optimal. The proofs presented in this part, however, do not rely on test statistics being unbiased, but rather on the notion of hypothesis tests having Neyman -- structure, that is having nominal type I error on the boundary of the parameter space for each value of the nuisance statistics. The tests are also ``unbiased'', meaning that the power function is at most the significance level $\alpha$ for parameter values in $H_0$, and at least $\alpha$ on $H_1$. For more details on these concepts, we refer the reader to \cite{bhattacharya, lehmann}.

In the next part, we will investigate how equivalent formulation of multiple regression change the   interpretation of effect sizes (Section \ref{multicoll}) and can lead to improved power for testing parameters (Section \ref{size}).


\section{Explicit models for multicollinearity} \label{multicoll} 

In broad terms, multicollinearity refers to the existence of a covariance structure among predictors that is not modeled as part of the regression equation, which is the conditional model of $Y|M$. In practice, multicollinearity is seen as correlation between components of the parameter estimate $\hat{\bm{\beta}}$, i.e., non-zero off--diagonal elements in the covariance matrix $\sigma^2(M^\intercal M)^{-1}$. Orthogonalizing the predictors --- in whichever way one decides to do this --- resolves the problem of multicollinearity because the new design matrix $X$ will have the property of $(X^\intercal X)^{-1}$ being diagonal. Thus, there is an obvious advantage to orthogonalizing the design matrix. The complication we face if we proceed is interpretability. In some cases, this is not important, and, in those cases principal components is often preferred, especially due to its dimension reduction properties. This is the case, for instance, in problems where $p \gg n$, where $n$ is the number of observations, because we want to filter out irrelevant predictors. In other cases, variables are meaningful and we want to be able to interpret their effect on the outcome. For instance, in the health sciences one typically wants to know the impact of covariates such as age and sex on a treatment outcomes. In this section we discuss the methods that preserve at least some interpretability of the original variables, and how they deal (or do not deal) with multicollinearity.

\subsection{Ridge regression} 
Starting from the multiple regression model $Y = M \bm{\alpha} + \epsilon$, \cite{hoerl} adapted the OLS estimator $\hat{\bm{\alpha}} = (M^\intercal M)^{-1} M^\intercal Y$ by adding a ``ridge'' to the diagonal of $M^\intercal M$, making the estimator
\begin{equation}  \label{ridge}
\hat{\bm{\alpha}}_{ridge} = (M^\intercal M + k I_p)^{-1} M^\intercal Y.
\end{equation}
This improves the stability of estimates and alleviates the impact of multicollinearity, at the expense of $\hat{\bm{\alpha}}_{ridge}$ being biased. The higher the ridge parameter $k$ is, the more the coefficient estimates approach $M^\intercal Y/k$, namely, the lengths of projections of the data vector $Y$ in the directions of each regressor, but scaled downward by a factor $k$. In the sieable literature on ridge regression, multicollinearity is seen as the ill-conditioning of matrix $M^\intercal M$, which is measurable via its eigenvalues (see \cite{hoerl, halawa}, etc). Specifically, if the eigenvalues of $M^\intercal M$ are, in order, $\lambda_{max} > \lambda_{(p-1)} > ... >  \lambda_{(2)} >\lambda_{min}$, eigenvalues close to zero imply a high degree of linear dependence between the columns of $M$. A statistic proposed by \cite{liu} to measure multicollinearity is the condition number $CN = \sqrt{\lambda_{max}/\lambda_{min}}$.
A condition number between 30 -- 100 is indicative of moderate/strong multicollinearity, while values greater than 100 correspond to severe multicollinearity.

By alleviating the symptoms of multicollinearity between predictor variables, ridge estimates have the potential to improve power over the t--tests in OLS. A similar t--test can be constructed for the parameter of interest, 
$t_i = \hat{\alpha}_{i, ridge}/s.e.(\hat{\alpha}_{i, ridge})$, where the standard error is the square root of the $i$--th diagonal element of  $Var(\hat{\bm{\alpha}}_{ridge})$. Similar to multiple regression, $t_i$ is distributed as Student--t with $n-p$ degrees of freedom, assuming the initial variables have been centered and model (\ref{ridge}) contains no intercept. 
\subsection{Interpretation of Gram--Schmidt and multiple regressions}
GS regression, by contrast, completely eliminates correlation among predictors by orthogonalizing the predictor set. 
The obvious concern that practitioners will have is how to interpret the transformed set. To answer this question, we need to better understand the structure among independent variables. 
Structural equation models (SEMs) attempt to fully define the structure among predictors via systems of equations, including (possibly) distributional assumptions of random terms. The GS decomposition ``naturally'' corresponds to a certain set of equations that define the covariance structure among the original predictors in terms of the remainders $\bm{x}$. Suppose that we perform the GS
orthogonalization for a particular ordering of the original variables given by permutation $\pi : (1, ..., p) \rightarrow (\pi_1, ..., \pi_p)$, such that the first variable in the GS sequence is $\pi_1$ from the original list, the second is $\pi_2$, and so on. From Algorithm \ref{algo}, we can write the following system of equations for the observed variables:
\begin{equation} \label{SEM}
\begin{cases}
\bm{m}_{\pi_1} &= \alpha_{11} \bm{x}_1 \\
\bm{m}_{\pi_2} &= \alpha_{21} \bm{x}_1 + \alpha_{22} \bm{x}_2\\
...\\
\bm{m}_{\pi_p} &= \alpha_{p1} \bm{x}_1+ \alpha_{p2} \bm{x}_2 +... + \alpha_{pp} \bm{x}_p \\
\bm{Y} &= \alpha_{\pi_1} \bm{m}_{\pi_1}+  \alpha_{\pi_2} \bm{m}_{\pi_2} +... + \alpha_{\pi_p} \bm{m}_{\pi_p} + \bm{\epsilon}\\
 &= (\alpha_{\pi_1} \alpha_{11} + \alpha_{\pi_2}\alpha_{21} + ...+ \alpha_{\pi_p} \alpha_{p1}) \bm{x}_1 + ...+ \alpha_{\pi_p} \alpha_{pp} \bm{x}_p  + \bm{\epsilon}.
\end{cases}
\end{equation}
Here, the SEM and related literature often interprets $\bm{x}_1, ..., \bm{x}_p$ as random latent factors, possibly coming from an independent standard normal distribution (see, e.g., \cite{goldberger}), if the original data have been appropriately centered. As observed by \cite{cross}, this particular SEM structure can further be identified with a directed acyclic graph (DAG), where each variable influences both the next variable as well as the outcome $Y$, both directly and indirectly, via all variables downstream of it. Using this random interpretation of the predictors, inferential methods could be used to select the most likely architecture of the independent variables according to model (\ref{SEM}), including the most likely ordering $\hat{\pi}$ that would have generated our predictors in $M$. 
For our purposes it is enough to say that the GS method can be interpreted by an appropriately chosen SEM architecture.

It is important to remember that this interpretation in model (\ref{SEM}) with the additional assumption of $\bm{x}_i$ being random is neither implied by the regression equation, nor a condition for using GS regression. Indeed, in the regression space predictors can simply be thought of as fixed design vectors. However, the extra SEM structure is illuminating in helping us better understand which effect, specifically, can be ascribed to a treatment, after adjusting for confounders. We may distinguish between direct and indirect effects of a variable on the outcome. For instance, the total effect of factor $\bm{x}_1$ on $y$ in the final equation may be thought of as $\partial y/\partial x_1$, as per the causal inference literature (see, e.g., \cite{pearl}). This can be decomposed into its direct effect ($\alpha_{\pi_1} \alpha_{11}$) and indirect effect ($\alpha_{\pi_2}\alpha_{21} + ...+ \alpha_{\pi_p} \alpha_{p1}$). By contrast, latent factor $\bm{x}_p$ only has a direct effect  ($\alpha_{\pi_p} \alpha_{pp}$). The effect of the original predictor of interest (say $\bm{m}_1$, without loss of generality) on the outcome can be thought of in the same way as a partial derivative $\partial y/\partial m_1$, which measures the change in outcome 
caused by a unit change in   $m_1$, assuming no change in the other variables. However, to see whether and how this change is possible, we need to look at the causal architecture in more detail. If $i$ is the position of $m_1$ in model  (\ref{SEM}), then $\pi_i = 1$ and 
$$ \bm{m}_{1} = \alpha_{i1} \bm{x}_1+ \alpha_{i2} \bm{x}_2 +... + \alpha_{i,i} \bm{x}_i. $$
The understanding here is that we can intervene directly to change $m_1$ by one unit, i.e., via a $1/\alpha_{i,i}$ change in $x_i$, without changing any of the other exogenous factors $x$. The effect of this on the outcome would be $(\alpha_{\pi_{i}} \alpha_{ii} + \alpha_{\pi_{i+1}}\alpha_{i+1,i} + ...+ \alpha_{\pi_p} \alpha_{pi}) /\alpha_{i,i}=  \beta_i/\alpha_{i,i}$.  Factors $m_{\pi_1}, m_{\pi_2},...,m_{\pi_{i-1}}$ are upstream from $m_1$ and can be held constant.  All variables downstream of $m_1$, namely $m_{\pi_{i+1}}, m_{\pi_{i+2}},...,m_{\pi_p}$ will have to change due to the change in $x_i$.
Thus, that the magnitude of the effect depends heavily on the position of our variable of interest, as well as on the correlation structure it has with its downstream variables. The statistical properties of the effect size estimate are given in the following

 \begin{proposition}\label{prop}
An estimate for the effect size of the $i$-th transformed predictor via Algorithm \ref{algo} is  $\hat{\beta}_i/||\hat{\bm{r}}_i||$, which is distributed as $N(\frac{\beta_i}{Q_{ii}}, \frac{\sigma^2}{Q_{ii}^2}).$
\end{proposition}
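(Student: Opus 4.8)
The plan is to recognize that the proposed effect-size estimate $\hat{\beta}_i/\|\hat{\bm{r}}_i\|$ is nothing more than a fixed scalar multiple of a single Gaussian OLS coefficient, so that the claimed distribution will follow immediately from the affine-transformation property of the normal law together with the marginal distribution of $\hat{\beta}_i$ already worked out in the proof of Theorem \ref{thm1}.

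First I would pin down the denominator. By Algorithm \ref{algo}, the matrix $Q$ is computed deterministically from the fixed design $M$, and its $i$-th diagonal entry is exactly $Q_{ii} = \|\hat{\bm{r}}_i\|$ (with the convention $Q_{11} = \|\bm{m}_1\|$), the norm of the residual of $\bm{m}_{\pi_i}$ after projecting out $\bm{x}_1,\dots,\bm{x}_{i-1}$. Because the covariates are fixed rather than random, $Q_{ii}$ is a constant; hence $\hat{\beta}_i/\|\hat{\bm{r}}_i\| = \hat{\beta}_i/Q_{ii}$ is a division by a deterministic quantity, not by a random variable. This observation, though elementary, is the crux of the argument, since it reduces the whole statement to one about $\hat{\beta}_i$ alone.

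Next I would recall the distribution of $\hat{\beta}_i$ from the orthonormal model (B). Writing $\hat{\beta}_i = \bm{x}_i^\intercal \bm{Y} = \beta_i + \bm{x}_i^\intercal \bm{\epsilon}$ and setting $W_i = \bm{x}_i^\intercal \bm{\epsilon}$, the argument in the proof of Theorem \ref{thm1} --- extend $\bm{x}_1,\dots,\bm{x}_p$ to an orthonormal basis $\bm{a}_1,\dots,\bm{a}_n$ of $\mathbb{R}^n$ and project $\bm{\epsilon}$ onto it --- shows that $W_i \sim N(0,\sigma^2)$. Therefore $\hat{\beta}_i \sim N(\beta_i, \sigma^2)$.

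Finally, applying the standard fact that an affine map sends a normal law to a normal law, dividing $\hat{\beta}_i \sim N(\beta_i,\sigma^2)$ by the constant $Q_{ii}$ yields $\hat{\beta}_i/Q_{ii} \sim N(\beta_i/Q_{ii},\, \sigma^2/Q_{ii}^2)$, which is the claim. The only point requiring care --- and essentially the only thing one could get wrong --- is the deterministic nature of $Q_{ii}$: if one mistakenly treated $\|\hat{\bm{r}}_i\|$ as random, one would be forming the ratio of a normal variable to a random quantity and would expect a $t$-type law, as in Theorems \ref{thm1} and \ref{thm2}. I therefore anticipate no genuine obstacle beyond being explicit that the normalizing constant arises from the fixed design and so contributes no randomness to the estimate.
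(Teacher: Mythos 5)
Your proposal is correct and takes essentially the same route as the paper's own proof: both rest on the observation that, because the design is fixed, $Q_{ii}=\|\hat{\bm{r}}_i\|$ is a deterministic quantity, and then apply the distribution $\hat{\beta}_i \sim N(\beta_i,\sigma^2)$ already established in the proof of Theorem~\ref{thm1}. Your write-up is simply a more explicit version of the paper's terse argument, which states that the result ``follows easily'' from exactly these two facts.
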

\begin{proof}
This assumes a structure as in model \ref{SEM}, where the $\bm{x}$ variables are non-random. The coefficients $\{ \alpha_{jk} \}$ are obtained without error in matrix $Q$, namely as $\alpha_{jk} = Q_{jk}^\intercal$. In particular, $\alpha_{i,i}=||\hat{\bm{r}}_i||$, the norm of the residual vector when regressing $\bm{m}_{1}$ on the previous $i-1$ basis vectors. The result follows easily from the discussion above, and the distribution of $\hat{\bm{\beta}}$.
\end{proof}

By contrast, in the case of the multiple regression model, the effect size $\partial y/\partial m_1=\alpha_1$, because there is no assumed structure among the independent variables. If we were to postulate an SEM model consistent with this interpretation of effect size, it could be the following:

\begin{equation} \label{SEM_naive}
\begin{cases}
M_i &= \alpha_{i1} X_1 + \alpha_{i2} X_2 +...+\alpha_{ip} X_p + \sigma_M \epsilon_i, \quad \text{for all } i=1..p \\
Y &= \alpha_1 M_1+  \alpha_2 M_2 +... + \alpha_p M_p + \epsilon\\
 &= (\sum_i \alpha_i \alpha_{i1}) X_1 + (\sum_i \alpha_i \alpha_{i2}) X_2 + ...+ \sigma_M \sum_i \alpha_i\epsilon_i + \epsilon,
\end{cases}
\end{equation}
where $X_1, ..., X_p$ and $\epsilon_1, ..., \epsilon_p$ are independent with mean zero and variance one.
In this model, each predictor $M_i$ has an idiosyncratic component $\epsilon_i$, and the observed correlation structure is driven by the $X$ latent factors. A unit change in $M_1$ can come about by a $1/\sigma_M$ change in $\epsilon_1$ alone; this will have a direct effect of $\alpha_1$ on $Y$, without affecting any of the other latent factors. Thus, the effect size interpretation in multiple regression rests on a model such as (\ref{SEM_naive}), whose mechanics allows changing each predictor independently of the others. If this was not the case, e.g., if $M_1$ did not have an $\epsilon_1$ term, then changing $M_1$ would require changing the $X$ variables, which would necessarily impact the other predictors. So the interpretation of effect size in multiple regression is not necessarily more robust than that similar interpretation in the GS model, but rather has built--in implicit assumptions. 

The question of what is a reasonable definition of effect size, and the related question of what is the likely generating model for predictors, depend strongly on the intent of the research and on the underlying ``real--world''  ability to control the variable of interest. There are prominent examples in the social science where investigators study measures of individual attainment, confounded by education and socio--economic status; research questions such as `what is the effect of education after adjusting for everything else?' are directly linked to the possibility of proposing policy to change that variable alone (e.g., via increasing funding for scholarships). The point here is that an effect size interpretation is meaningful if the underlying latent factors are meaningful, and, ideally, actionable. For instance, the interpretation presented above for model (\ref{SEM_naive}) would require the idiosyncratic factors $\epsilon_i$ to be substantively identified, at least conceptually. If they only represent measurement errors, these cannot be acted upon, making the effect size interpretation above somewhat precarious. 

\subsection{Special cases}
There are two positions in the GS orthogonalization sequence that have special meaning --- the first and the last:
(i) when the predictor of interest is the first in the sequence, it is a common cause for (potentially) all other predictors, while being unaffected by any other variable in the model. In this case, its estimate $\hat{\beta}_1$ is the same as the estimate obtained from marginal regression on this variable alone. It is known that the coefficient $\beta_1$ in a simple regression model is related to the correlation coefficient via $\rho_{X_1 Y} = \beta_1 \sigma_{X_1}/\sigma_Y$. Thus, as pointed out in \cite{hsieh}, a test of the correlation coefficient $Y$ and $X_1$ being zero is equivalent to the same test on $\beta_1$. In this case, testing $H_0: \beta_1=0$  
means testing for \textit{association} between the variable of interest and the outcome. 
(ii) When the predictor of interest is last in the GS sequence, its residual contribution has been adjusted for all possible effects of the other confounders. Provided that all relevant variables for explaining $Y$ have been included in the original regression, testing for the remainder of the last variable is a test for \textit{causality}, because rejecting the null means that the predictor of interest has a significant direct effect on the outcome that cannot be explained by any of the other variables. This is important theoretically, because we can test for causality without needing a causal diagram.
The drawback is that the coefficient $\beta_p$ of the last predictor only reflects a direct effect on the outcome; thus, a test is likely to be underpowered without further 
knowledge of the DAG.

For the rest of the paper, we assume a preexisting order of orthogonalization. 
This may be given by expert knowledge, or by an independent investigation of the variables. The correct causal specification of the model would ensure a meaningful interpretation of effect sizes. However, the next results are conditional on the design matrix, hence agnostic to any assumptions about the independent variables.

\section{Power differences between parameterizations}\label{size}
 We consider whether coefficient testing under the GS regression model is more powerful than testing for the corresponding coefficient in the naive model. 

 The following theorem explores the conditions under which  there exists a power difference when testing for the coefficient of interest under the GS and multiple regressions. It then computes an equivalent sample size under the two models to attain the same power. The intent of this calculation is to demonstrate the utility of the GS method in study design, in the context in which  a pilot study (of size $n_0$) is followed up by a larger study of size $k_A n_0$ or $k_B n_0$, depending on which model will be used to analyze the data. In this paper, we consider a larger study to be simply a scaled--up version of the pilot, including $k$ replicates for each row of the original design matrix. Let us first define the following quantity:
\begin{definition}
 Define $\Delta =\frac{\beta_1 ||\bm{q}_1||}{  \bm{q}_1^\intercal \bm{\beta}}$. This can be equivalently written $\Delta =\frac{\beta_1 ||\bm{q}_1||}{  \alpha_1}$, or $\Delta =\frac{||\bm{q}_1|| Q_{1*} \bm{\alpha} }{  \alpha_1}$. Although interest is often with the first variable in the GS method, we can more generally define   $\Delta_i =\frac{\beta_i ||\bm{q}_i||}{  \bm{q}_i^\intercal \bm{\beta}}$ when interest is in testing variable $\bm{x}_i$ in Algorithm \ref{algo}.
\end{definition}
Here, we have used the notation $Q_{i*}$ to denote the $i$-th row of matrix $Q$, seen as a $1\times p$ matrix.  Thus, $\bm{q}_i^\intercal = Q_{i*}^{-1}$, and we shall continue using the vector notation when shorter. 

 \begin{theorem}\label{thm5}
In the following two parameterizations of the same regression model: 
\begin{align*} \bm{Y} &=\alpha_1 \mathbf{m}_{1}+\alpha_2 \mathbf{m}_{2}+...+\alpha_p \mathbf{m}_{p}+ \bm{\epsilon}, \text{and}  \tag{A}\\
\bm{Y} &=\beta_1 \mathbf{x}_{1}+\beta_2 \mathbf{x}_{2}+...+\beta_p \mathbf{x}_{p}+ \bm{\epsilon} \tag{B}
\end{align*}
where $\bm{\epsilon} \sim N(0,\sigma^2 I)$ and  $M=XQ$ is the Gram--Schmidt decomposition of design matrix $M$ (with $X$ orthonormal, and $Q$ upper triangular); let tests $\phi_A, \phi_B$ for $H_{A0}: \alpha_i \leq 0$ vs $H_{A1}: \alpha_i > 0$; and $H_{B0}: \beta_i \leq 0$ vs $H_{B1}: \beta_i > 0$, respectively, be defined via the usual t statistics $V_A=\frac{ \hat{\alpha}_i}{ \text{s.e.}( \hat{\alpha}_i)}  \quad \text{and} \quad V_B=\frac{ \hat{\beta}_i}{ \text{s.e.}( \hat{\beta}_i)}$. 
Then:
\begin{enumerate}[(a)]
\item the power of  $\phi_B$ is higher  than the power of $\phi_A$ iff $\beta_i >   \bm{q}_i^\intercal \bm{\beta}/||\bm{q}_i||$. 
\item In two planned studies of sample sizes $n_A$ and $n_B$ to be analyzed via models A and B, respectively,  a one-sided test of  the first variable in each model is asymptotically equivalent in terms of power iff $\frac{n_A}{n_B}=\Delta_i^2$ and $\alpha_i, \beta_i$ have the same sign.
\end{enumerate}

\end{theorem}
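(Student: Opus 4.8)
The plan is to recognize both test statistics as noncentral $t$ variables and reduce everything to a comparison of their noncentrality parameters. From the proof of Theorem \ref{thm2}, $\hat{\bm{\beta}}=X^\intercal\bm{Y}$ has independent components $\hat\beta_j\sim N(\beta_j,\sigma^2)$ (because $X$ is orthonormal), and $SSE\sim\sigma^2\chi^2_{n-p}$ independently of $\hat{\bm{\beta}}$. First I would record the two standard errors: since $(X^\intercal X)^{-1}=I$ we get $\text{s.e.}(\hat\beta_i)=s$, while $(M^\intercal M)^{-1}=Q^{-1}(Q^{-1})^\intercal$ gives $(M^\intercal M)^{-1}_{ii}=\|\bm{q}_i\|^2$ and hence $\text{s.e.}(\hat\alpha_i)=s\|\bm{q}_i\|$, where $s^2=SSE/(n-p)$. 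Using $\hat\alpha_i=\bm{q}_i^\intercal\hat{\bm{\beta}}\sim N(\alpha_i,\sigma^2\|\bm{q}_i\|^2)$ with $\alpha_i=\bm{q}_i^\intercal\bm{\beta}$, I would conclude that $V_A$ and $V_B$ each follow a noncentral $t_{n-p}$ law with the \emph{same} degrees of freedom but noncentrality parameters $\delta_A=\alpha_i/(\sigma\|\bm{q}_i\|)=\bm{q}_i^\intercal\bm{\beta}/(\sigma\|\bm{q}_i\|)$ and $\delta_B=\beta_i/\sigma$.

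For part (a), both tests reject when the statistic exceeds the common critical value $t_{n-p,1-\alpha}$, so their powers are the upper-tail probabilities of two noncentral $t_{n-p}$ laws evaluated at the same cutoff. The fact I would invoke is that the noncentral $t_{n-p}$ family is stochastically increasing in its noncentrality, so this upper-tail probability is strictly increasing in $\delta$. Hence the power of $\phi_B$ exceeds that of $\phi_A$ iff $\delta_B>\delta_A$, i.e. iff $\beta_i/\sigma>\bm{q}_i^\intercal\bm{\beta}/(\sigma\|\bm{q}_i\|)$, which after clearing $\sigma>0$ is exactly $\beta_i>\bm{q}_i^\intercal\bm{\beta}/\|\bm{q}_i\|$.

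For part (b), I first track how the quantities scale when each row of the pilot design $M_0$ is replicated $k$ times. Since $M^\intercal M=k\,M_0^\intercal M_0=kQ_0^\intercal Q_0$ and Gram--Schmidt fixes the sign convention, $Q=\sqrt{k}\,Q_0$, hence $Q^{-1}=k^{-1/2}Q_0^{-1}$, giving $\bm{q}_i=k^{-1/2}\bm{q}_{0,i}$ and $\|\bm{q}_i\|=k^{-1/2}\|\bm{q}_{0,i}\|$. The physical effects $\bm{\alpha}$ on the original predictors are held fixed across studies, so $\alpha_i$ is unchanged while $\bm{\beta}=Q\bm{\alpha}=\sqrt{k}\,\bm{\beta}_0$ gives $\beta_i=\sqrt{k}\,\beta_{0,i}$; one checks that $\Delta_i=\beta_i\|\bm{q}_i\|/\alpha_i$ is itself invariant under replication. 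Substituting into the noncentralities above yields $\delta_A=\sqrt{k_A}\,\delta_{A,0}$ and $\delta_B=\sqrt{k_B}\,\delta_{B,0}$, where $\delta_{A,0}=\alpha_i/(\sigma\|\bm{q}_{0,i}\|)$, $\delta_{B,0}=\beta_{0,i}/\sigma$, and $\Delta_i=\delta_{B,0}/\delta_{A,0}$. As the sample sizes grow, the degrees of freedom diverge and the one-sided noncentral $t$ power converges to its Gaussian approximation $\Phi(\delta-z_{1-\alpha})$, strictly increasing in $\delta$; hence the two tests are asymptotically equipowerful iff $\delta_A=\delta_B$. Equating $\sqrt{k_A}\,\delta_{A,0}=\sqrt{k_B}\,\delta_{B,0}$ forces $\delta_{A,0}$ and $\delta_{B,0}$ to share a sign -- that is, $\alpha_i$ and $\beta_i$ have the same sign -- and squaring gives $k_A/k_B=(\delta_{B,0}/\delta_{A,0})^2=\Delta_i^2$, which is $n_A/n_B=\Delta_i^2$ since $n_A/n_B=k_A/k_B$.

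The main obstacle I anticipate is part (b): correctly identifying which quantities are held fixed under ``scaling up'' (the physical effects $\bm{\alpha}$, not the reparameterized $\bm{\beta}$) and propagating the $\sqrt{k}$ factors through the Gram--Schmidt decomposition so the noncentrality scaling comes out right. I would also state precisely the sense of ``asymptotic'' equivalence: matched noncentralities give equal power only in the large-sample limit where $t_{n-p,1-\alpha}$ and the noncentral $t$ law are replaced by their Gaussian counterparts, since at finite $n$ the differing degrees of freedom $n_A-p\neq n_B-p$ prevent exact equality. The monotonicity of noncentral-$t$ power in $\delta$, used in both parts, is the one standard fact I would cite rather than reprove.
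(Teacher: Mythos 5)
Your proposal is correct and follows essentially the same route as the paper: the same reduction via $\hat{\alpha}_i=\bm{q}_i^\intercal\hat{\bm{\beta}}$ with $\text{s.e.}(\hat{\alpha}_i)=s||\bm{q}_i||$ versus $\text{s.e.}(\hat{\beta}_i)=s$, and in part (b) the same replication analysis giving $Q^{(k)}=\sqrt{k}\,Q$ and $\bm{\beta}^{(k)}=\sqrt{k}\,\bm{\beta}$ while $\bm{\alpha}$ stays fixed, which is exactly the step the paper treats as the crux. The only cosmetic difference is in part (a), where you cite stochastic monotonicity of the noncentral $t_{n-p}$ power in its noncentrality, whereas the paper proves that fact inline by writing both powers as $P\bigl(\text{mean shift} \geq t_{n-p,1-\alpha}\sqrt{SSE/(n-p)}-Z\bigr)$ with an identically distributed right-hand side.
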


\begin{proof} 
\textit{Part (a)}  From the proof of Theorem \ref{thm2}, we have $\hat{\bm{\alpha}} =Q^{-1} \hat{\bm{\beta}}$.
Furthermore, using the well--known formula for the variance--covariance matrix of the OLS estimate, we have  
$$\text{Var}(\hat{\bm{\alpha}} )=\sigma^2[(XQ)^\intercal XQ]^{-1}=\sigma^2(Q^\intercal Q)^{-1}=\sigma^2 Q^{-1} (Q^{-1})^\intercal.$$
 The standard error of $\hat{\bm{\alpha}}$ is computed by replacing $\sigma^2$ with $s^2 = \frac{\hat{\bm{\epsilon}}^\intercal \hat{\bm{\epsilon}}}{n-p}=\frac{SSE}{n-p}$, which is the same for both parameterizations. Let also $Q^{-1}=(\bm{q}_i, \bm{q}_2, ...,\bm{q}_p)^\intercal$. Thus we can simplify $\hat{\alpha}_i=\bm{q}_i^\intercal \hat{\bm{\beta}}$ and $(\text{s.e.}(\hat{\alpha}_i))^2=s^2\bm{q}_i^\intercal \bm{q}_i=\frac{SSE}{n-p}||\bm{q}_i||^2$, making the t-test statistic for  $\phi_A$:
$$
 V_A =\frac{\sqrt{n-p}\, \bm{q}_i^\intercal \hat{\bm{\beta}}}{\sqrt{SSE}||\bm{q}_i||} \sim t_{n-p}. 
$$ 
The power function for $\phi_A$ in terms of the (scaled) effect sizes $\bm{\beta}$ is  
\begin{align*}
\pi_A(\bm{\beta}, \sigma) &=P_{\beta, \sigma}(V_A \geq t_{n-p,1-\alpha})= P_{\beta, \sigma} \left( \frac{ \bm{q}_i^\intercal \hat{\bm{\beta}}}{||\bm{q}_i||} \geq t_{n-p,1-\alpha} \sqrt{\frac{SSE}{n-p}} \right) \\
&=P_{\beta, \sigma} \left( \sum_{i=1}^p \frac{q_{1i} }{||\bm{q}_i||} (\beta_i + W_i) \geq t_{n-p,1-\alpha} \sqrt{\frac{SSE}{n-p}} \right)\\
&=P_{\beta, \sigma} \left(\frac{\bm{q}_{1}^\intercal \bm{\beta} }{||\bm{q}_i||} \geq t_{n-p,1-\alpha} \sqrt{\frac{SSE}{n-p}} - Z \right), 
\end{align*}
where $Z= \sum_{i=1}^p \frac{q_{1i} }{||\bm{q}_i||}  W_i$ and  quantities $W_i$ are defined in the proof of Theorem 1. Because $W_1,...,W_p \sim i.i.d. N(0, \sigma^2)$, we have $E(Z) = 0$, and $\text{Var}(Z) = \sum_{i=1}^p \frac{q_{1i}^2 }{||\bm{q}_i||^2}  \text{Var}(W_i)=\sigma^2$. Similarly, the power function for $\phi_B$ is  
\begin{align*}
\pi_B(\bm{\beta}, \sigma) &=P_{\beta, \sigma}(V_B \geq t_{n-p,1-\alpha})= P_{\beta, \sigma} \left( \hat{\beta}_i \geq t_{n-p,1-\alpha} \sqrt{\frac{SSE}{n-p}} \right) \\
&=P_{\beta, \sigma} \left( \beta_i +W_i \geq t_{n-p,1-\alpha} \sqrt{\frac{SSE}{n-p}} \right) 
=P_{\beta, \sigma} \left(\beta_i  \geq t_{n-p,1-\alpha} \sqrt{\frac{SSE}{n-p}} -W_i \right). 
\end{align*}
Since $W_i \overset{d}{=}Z$ and both $W_i$ and $Z$ are independent of $SSE$ we conclude that 
$$
\pi_B(\bm{\beta}, \sigma) > \pi_A(\bm{\beta}, \sigma) \iff \beta_i > \frac{\bm{q}_i^\intercal \bm{\beta} }{||\bm{q}_i||}.
$$

\textit{Part (b)} Assume that the initial study has true parameter vectors $\bm{\alpha}$ and $\bm{\beta}$ under models A and B, respectively. 
Denote the new design vectors as $\bm{m}_1^{(k)},\bm{m}_2^{(k)},...,\bm{m}_p^{(k)}$; each is obtained by stacking the initial vectors on top of each other $k$ times, such as $\bm{m}_1^{(k)}=(\bm{m}_1^\intercal, \bm{m}_1^\intercal,...,\bm{m}_1^\intercal)^\intercal$, and so on. This makes the design matrix of the new study $M^{(k)}=[M^\intercal \ M^\intercal ...\ M^\intercal]^\intercal$ of size $(k n_0) \times p$. 
 Similarly, denote the new orthogonal vectors as $\bm{x}_j^{(k)}$, and the coefficient vectors for the planned studies as $\bm{\alpha}^{(k)}$ and $\bm{\beta}^{(k)}$. The first thing to notice is that, while $\bm{\alpha}^{(k)}$ is the same vector as $\bm{\alpha}$ for any $k$, the scale of $\bm{\beta}^{(k)}$ changes, due to the fact vectors $\bm{x}_j^{(k)}$ are still normalized to one, making each of their components shrink. To see this for the first vector,
\begin{align*}
\bm{x}_1^{(k)} &= \frac{\bm{m}_1^{(k)}}{||\bm{m}_1^{(k)}||}= \frac{(\bm{m}_1^\intercal, \bm{m}_1^\intercal,...,\bm{m}_1^\intercal)^\intercal}{\sqrt{k \sum_{i=1}^{n_0}m_{1,i}^2}} 
=  \frac{[I_{n_0} \ I_{n_0} ... \ I_{n_0}]^\intercal \bm{m}_1}{||\bm{m}_1|| \sqrt{k} }\\ 
&=\frac{1}{ \sqrt{k}} [I_{n_0} \ I_{n_0} ... \ I_{n_0}]^\intercal \bm{x}_1 = \frac{1}{ \sqrt{k}} (\bm{x}_j^\intercal, \bm{x}_j^\intercal,...,\bm{x}_j^\intercal)^\intercal.
\end{align*}

We can follow the same argument throughout Algorithm \ref{algo}, where each residual vector $\bm{r}_j^{(k)}$ ends up being a repetition of $k$ stacked  $\bm{r}_j$ vectors from the initial study. When these residuals are normalized, we will have $\bm{x}_j^{(k)} = (\bm{x}_j^\intercal, \bm{x}_j^\intercal,...,\bm{x}_j^\intercal)^\intercal /  \sqrt{k}$. 
Solving for $Q$ from $M^{(k)}=X^{(k)}Q$ yields $Q^{(k)}=\sqrt{k}Q$, and  $\bm{\beta}^{(k)}=\sqrt{k}\bm{\beta}$. 

Next, we wish to equate power under the two models as $k \rightarrow \infty$ and establish a relationship between $n_A$ and $n_B$.
From Theorem \ref{thm1}, we have $SSE=R \sim \sigma^2 \chi^2_{n-p}$. Hence, $\frac{SSE}{n-p} \overset{a.s.}{\rightarrow} \sigma^2$ as $n \rightarrow \infty$; as well, $t_{n-p,1-\alpha} \rightarrow z_{1-\alpha}$. Thus, the power functions for testing the $i$-th variable under models A and B become
\begin{align*}
\lim_{k_A\rightarrow \infty} \pi_A(\bm{\alpha}^{(k_A)}, \sigma) &=P_{\bm{\alpha}^{(k_A)}, \sigma} \left(\frac{\alpha_i^{(k_A)} }{||\bm{q}_i/\sqrt{k_A}||} \geq \sigma z_{1-\alpha}  - Z\right)\hspace{-0.3em},\ a.s., \text{ and} \\
\lim_{k_B\rightarrow \infty} \pi_B(\bm{\beta}^{(k_B)}, \sigma) &=P_{\bm{\beta}^{(k_B)}, \sigma} \left(\beta_i^{(k_B)}  \geq \sigma z_{1-\alpha}  -W_i\right)\hspace{-0.3em}, a.s.,
\end{align*}
where we have used the fact that the inverse of $Q^{(k)}$ is $\frac{1}{\sqrt{k}}Q^{-1}$.
As $Z, W_i \sim N(0, \sigma^2)$, equating the powers in the  limit is equivalent to the condition
\begin{align*}
\frac{\alpha_i^{(k_A)} }{||\bm{q}_i||/ \sqrt{k_A}} =\beta_i^{(k_B)} & \Leftrightarrow \frac{\alpha_i}{||\bm{q}_i||/\sqrt{k_A}} =\beta_i \sqrt{k_B}\\ 
\Leftrightarrow \sqrt{\frac{k_A}{k_B}} = \frac{\beta_i ||\bm{q}_i||}{\alpha_i} & \Leftrightarrow  \frac{n_A}{n_B} = \Delta_i^2 \text{ and } \alpha_i\beta_i >0. 
\end{align*}
\end{proof}

This theorem suggests that $\Delta_i$ is an important quantity related to multicollinearity. If, in addition, we knew that $\alpha_i >0$, then part (a) says that  the Gram--Schmidt regression will lead to a more powerful test for the first predictor compared to (naive) multiple regression if and only if $\Delta_i > 1$. 
We can also find a more meaningful interpretation of $\Delta$, by writing it as

\[
\Delta = \frac{\beta_1/SD(\hat{\beta_1})}{\alpha_1/SD(\hat{\alpha_1})}  = \frac{CV(\hat{\alpha_1})}{CV(\hat{\beta_1})},
\]
where CV denotes the coefficient of variation of a parameter estimate. Thus, $\Delta$ is the ratio between strength of significance of the first coefficient in models B versus model A, expressed in terms of how many standard deviations the true parameter values are from zero. It is not surprising then that a $\Delta$ greater than one implies more power for model B. Another remark about $\Delta$ is that it is 1 when $\bm{m}_{1}$ is perpendicular to the span of the set $\{\bm{m}_2,\bm{m}_3,...,\bm{m}_p\}$. In this case, the VIF, defined as $1/(1-\rho^2_{1.234...p})$, where $\rho^2_{1.234...p}$ gives the proportion of the variance of $\bm{m}_{1}$ explained by the other covariates, is also 1.  However, unlike VIF, which is fully determined by the independent variables, $\Delta$ contains the effect sizes in the regression model, and so is a more comprehensive measure of the impact of multicollinearity on the regression relationship. 

\section{Applications}\label{sim}

\subsection{Simulations of power} \label{study-pow}
We generate independent variables $M_1, M_2,..., M_p$ with a certain correlation structure, then simulate a continuous outcome $Y$ conditional on the design matrix $M$. In the notation of model (\ref{SEM}), the data generating equations are:

\[
\begin{cases}
M_1=Z_1 \\
M_i=\rho Z_1 +  Z_i, \quad  \text{for $i=2..p$, and}\\
Y = \frac{1}{p} M_1 + ... + \frac{1}{p} M_p +\sigma \epsilon,
\end{cases}
\]
where $Z_1, Z_2,..., Z_p,$ and $\epsilon$ are i.i.d. standard normal variates.
Thus, $\rho$ controls the correlation between the independent variables, and $\sigma$ controls (indirectly) the correlation between all predictors and the outcome. A small $\sigma$ will increase the effect size for all variables, and a large $\rho$ increases the multicollinearity. We consider a combination of scenarios with $\rho$ taking values in $\{-0.25, 0.25, 0.5 \}$; $\sigma$ covers the positive range from 1 to $\infty$, and the number of predictors $p$ is in $\{3, 5, 15\}$.  For  $N=1000$ replicated studies of size $n=200$ samples, we obtain empirical power at the $5\%$ level for testing that the coefficient of $M_1$ is positive versus zero. 

The models used are: (a) naive multiple regression of the centered outcome on the scaled and centered $M_i$ variables, without an intercept; (b) GS regression which orthogonalizes the centered and scaled input matrix $M$ around the first variable ($M_1$); and (c) ridge regression, using the same input as (a). The tuning parameter $k$ is computed as $k_{K12}$ in \cite{perez-melo},
which found it to have superior average performance in coefficient testing, compared to other choices for $k$.

Figure  \ref{fig1} shows empirical power for the various simulation scenarios, with power curves for all three models plotted against $\sigma^{-1}$ as a measure of increasing effect size. Notice that  the GS model outperforms the other models for the positive $\rho$ values and is underpowered for negative  $\rho$. The power differential improves with higher  $\rho$  and $p$ values. This is not surprising, as the  coefficient  of the first predictor, $\beta_1$ cumulates larger indirect effects, from more variables in these cases. This means that more severe multicollinearity actually helps the GS test, so long as the independent variables are positively correlated. Otherwise, testing based on ridge regression is consistently more powerful that testing under the naive model, though by a modest amount. In these situations, we can see that the metric $\Delta$ is a faithful discriminant of power between the GS and naive regressions, indicating superior performance for values greater than 1, no power for GS when $\Delta =0$, and even declining power for negative values.

\begin{figure} [htb]
\centering
\includegraphics[width=1\linewidth]{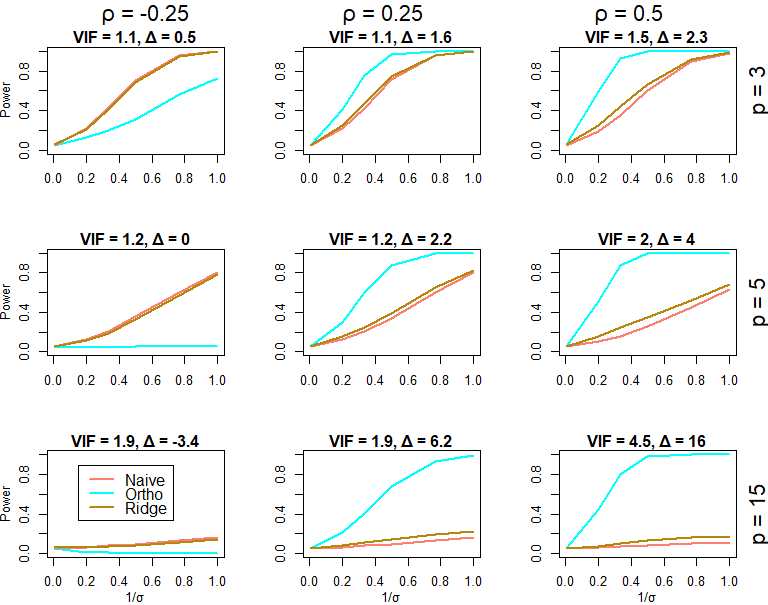}
\caption{Power profiles for the first coefficient t-test under the naive, Gram--Schmidt, and ridge regression models, for different values of $\rho, p$ and $\sigma$. All tests are one-sided. The variance inflation factor and average $\Delta$ are shown, for each setting.}
\label{fig1}
\end{figure}

\subsection{Example: air pollution dataset}
As a real data illustration, we reanalyze the historic dataset of McDonald and Schwing \cite{mcdonald}, who looked at the problem of relating total age-adjusted mortality to air pollutants via linear regression. This problem is difficult due the high correlation present among variables, which made regression estimates unstable.  Explanatory variable included in the study could be grouped into three categories: (1) Relative pollution potential related to: hydrocarbons (HC), SO$_2$, and NO$_x$; (2) Sociodemographic variables, including education, household size, \% over 65,\% Non-white, \% income under \$3,000 in 1960,  \% white collar,  \% sound housing units (including all facilities), and  population density; and (3) Weather variables, including annual precipitation, mean temperatures in January and July, and annual relative humidity. The study did find a significant association of  sulphur dioxide with mortality, but failed to find evidence for the other two pollutants. Since the 1970s, there have been similar studies, showing a consistent but small effect size for pollution (e.g., \cite{atkinson, schwartz} and others). This is thus an ideal case to test GS regression  on.

To run the GS algorithm we first decide on an order of variables to orthogonalize: SO$_2$, HC, NO$_x$, then sociodemographic, then weather variables. We chose this sequence for illustration purposes, not for any causal rationale. As this ordering of the pollution variables is somewhat arbitrary, we will later consider other orderings of the same variables. 
Next, we center the outcome (mortality), and proceed without an intercept in the models. We orthogonalize all 15 centered predictors and fit model $B$ on the resulting orthonormal basis. We compare the resulting p-values for the three pollution variables with the ones inferred from the results in the original study. In their analysis, McDonald and Schwing relied on ridge regression to stabilize the magnitude of coefficient estimates, and also eliminated variables to mitigate the effects of multicollinearity. They end up with six variables, including only  SO$_2$ from the variables of interest. As they explain, the reason for dropping the other variables is due, at least in part, to the high correlated of pollutants with each other, as well as with others that are not included in the study (for example carbon monoxide, lead salts, and other particulates). As such, they do not expect to comprehensively estimate the particular risks of HC, $\textrm{NO}_x$ and $\textrm{SO}_2$, but rather to quantify the relationship.

\begin{table}[]
\centering
\caption{Coefficient estimates and two-sided p-values of the original 1973 study compared to Gram-Schmidt regression. Note: p-values are implied for the original study based on the standard error reported.}     
\begin{tabular}{r|rr|rr}
\toprule
             & \multicolumn{2}{c}{Estimate} & \multicolumn{2}{c}{P-value} \\
  Variable           & McDonald and     &   GS \quad\qquad       &\qquad Original     & GS \quad\qquad       \\
            	     & Schwing (1973)     &     \qquad regression      & \qquad (implied)     &  regression    \\
\midrule \midrule
SO2          & 0.255          & 203.50       & 2.91e-05     & 4.52e-07     \\
HC           &      --         & -148.16      &     --         & 9.36e-05     \\
NOx          &     --          & 120.12       &       --       & 0.0011       \\ \midrule
Over 65      &     --          & -107.23      &      --        & 0.0033       \\
Hh. size     &       --        & 61.88        &       --       & 0.0799       \\
Educ.         & -0.190         & -146.74      & 0.0026       & 0.0001       \\
Housing      &      --         & -70.09       &       --       & 0.0484       \\
Density      &       --        & 68.09        &     --         & 0.0548       \\
Non-white    & 0.481          & 186.75       & 3.15e-13     & 2.35e-06     \\
white collar &      --         & -27.16       &    --          & 0.4358       \\
Poor         &       --        & -74.82       &      --        & 0.0356       \\
Precip.       & 0.247          & 35.95        & 0.0001       & 0.3036       \\
Jan Temp     & -0.164         & -53.62       & 0.0092       & 0.1275       \\
July Temp    & -0.073         & -71.00       & 0.2687       & 0.0457       \\
Humidity       &    --           & 3.19         &     --         & 0.9268           \\ \bottomrule

\end{tabular}
\label{tab2}
\end{table}

Table \ref{tab2} shows the fit using the GS approach, and the most favourable of the fits reported in \cite{mcdonald}, although all of their reduced model fits are reasonably consistent in terms of estimates and standard errors. The first thing to notice is that the new approach can include all three pollution predictors, all of which are found significant, with the mention that predictors refer to their normalized remainders under GS regression. To investigate the effect of  orthogonalization sequence, we include in the appendix the significance levels obtained by considering the other five of the total of six permutations of the variables of interest. As can be seen from Table \ref{tab.appendix}, at least one of the three pollution variables remains highly significant in each fit at a level exceeding that of SO$_2$ in the original study. It is not always the same one being the most significant, which is consistent with previous knowledge of pollutants being highly correlated. A second, more subtle remark is that the GS approach tends to give more statistical power to predictors that come towards the front of the list, at the expense of those that come towards the end, effectively giving statistical ``priority" to those variables. This is to be expected: if most predictors ``agree'' with the first ones, they will lend their effects to those first directions, when decomposed.

\section{Discussion} \label{disc}
In this paper we have proved that the UMP unbiased test for a parameter of the multiple regression model is the coefficient  t--test. Beyond this model--specific optimality, equivalent models could provide better inference, and  the Gram--Schmid transformation is one way to create a family of models with the same solution space. 
The new set of predictors for each member of the family is geometrically interpretable, corresponds to a specific SEM, and, if the model is appropriate, testing of coefficients will often have power advantages in this setting compared to multiple regression. The source of this power comes from leveraging the correlation structure between the independent variables. This transformation of the predictor set qualifies the meaning of ``adjustment'' in linear regression, which  depends on the assumed structure between predictors. 
Standard multiple regression purposely ignores the causal substructure between variables by assuming that each input can be changed independently of the others. This is often unrealistic in practical applications, where changes in one predictor will impact a number of other predictors, in addition to the outcome. The GS approach will likely be very powerful when testing for association, or when the variable of interest is a common cause for other predictors. 
Finally, this family of models characterized by a linear causal pathway can be extended by allowing a subset of predictors to have simultaneous effect, i.e., as in multiple regression (see \cite{cross}). This allows for more causal structures to be mapped and analyzed in this way, however, if the simultaneous subset includes the variable of interest, one  will have to accept some correlation in the design matrix. 

From the point of view of multicollinearity, 
we have introduced a new metric, $\Delta$, which summarizes the amount of benefit from using the GS approach instead of multiple regression, or, in other words, the price of multicollinearity in standard regression, in terms of power and sample size requirements. This is arguably a more meaningful metric compared to the VIF for study planning, as it accounts for both the dependent and independent variables, while the latter only looks at correlation between independent variables.


\begin{appendices}

\section{Additional fits for the data example}\label{sec11}

\begin{table}[h]
\centering
\caption{Alternative fits of the Gram--Schmidt regression using a different orthogonalization sequence. The order is given at the top of each colum and only the p--value is shown in the table. Other predictors are not shown.}
\begin{tabular}{llllll}
\toprule
\multirow{2}{*}{Pollutant} & \multicolumn{5}{c}{Order}                            \\
                           & a,c,b    & b,a,c    & b,c,a    & c,a,b    & c,b,a    \\ \hline
SO2 (a)                    & 4.52e-07 & 1.63e-08 & 0.088 & 1.26e-08 & 0.088 \\
HC \ (b)                     & 0.00024  & 0.018 & 0.018 & 0.00024  & 6.48e-10 \\
NOx (c)                    & 0.00041  & 0.0011 & 1.90e-09 & 0.29  & 0.29 \\ \bottomrule
\end{tabular}
\label{tab.appendix}
\end{table}

\end{appendices}

\section{Competing interests}
No competing interest is declared.

%
%
\section{Acknowledgments}
RGR is based at the George \& Fay Yee Centre for Healthcare Innovation. Support for CHI is provided by University of Manitoba, Canadian Institutes for Health Research, Province of Manitoba, and Shared Health Manitoba.

\bibliographystyle{abbrvnat}
\bibliography{My_Library_of_References}

\begin{thebibliography}{26}
\providecommand{\natexlab}[1]{#1}
\providecommand{\url}[1]{\texttt{#1}}
\expandafter\ifx\csname urlstyle\endcsname\relax
  \providecommand{\doi}[1]{doi: #1}\else
  \providecommand{\doi}{doi: \begingroup \urlstyle{rm}\Url}\fi

\bibitem[Atkinson et~al.(2018)Atkinson, Butland, Anderson, and
  Maynard]{atkinson}
R.~W. Atkinson, B.~K. Butland, H.~R. Anderson, and R.~L. Maynard.
\newblock Long-term {Concentrations} of {Nitrogen} {Dioxide} and {Mortality}:
  {A} {Meta}-analysis of {Cohort} {Studies}.
\newblock \emph{Epidemiology}, 29\penalty0 (4):\penalty0 460--472, July 2018.
\newblock ISSN 1044-3983.
\newblock \doi{10.1097/EDE.0000000000000847}.
\newblock URL \url{https://journals.lww.com/00001648-201807000-00002}.

\bibitem[Bhattacharya and Burman(2016)]{bhattacharya}
P.~Bhattacharya and P.~Burman.
\newblock Hypothesis {Testing}.
\newblock In \emph{Theory and {Methods} of {Statistics}}, pages 125--177.
  Elsevier, 2016.
\newblock ISBN 978-0-12-802440-9.
\newblock \doi{10.1016/B978-0-12-802440-9.00006-0}.
\newblock URL
  \url{https://linkinghub.elsevier.com/retrieve/pii/B9780128024409000060}.

\bibitem[Choi et~al.(1996)Choi, Hall, and Schick]{choi}
S.~Choi, W.~J. Hall, and A.~Schick.
\newblock Asymptotically uniformly most powerful tests in parametric and
  semiparametric models.
\newblock \emph{The Annals of Statistics}, 24\penalty0 (2), Apr. 1996.
\newblock ISSN 0090-5364.
\newblock \doi{10.1214/aos/1032894469}.
\newblock URL
  \url{https://projecteuclid.org/journals/annals-of-statistics/volume-24/issue-2/Asymptotically-uniformly-most-powerful-tests-in-parametric-and-semiparametric-models/10.1214/aos/1032894469.full}.

\bibitem[Clyde et~al.()Clyde, Desimone, and Parmigiani]{clyde}
M.~Clyde, H.~Desimone, and G.~Parmigiani.
\newblock Prediction {Via} {Orthogonalized} {Model} {Mixing}.

\bibitem[Cross and Buccola(2025)]{cross}
R.~M. Cross and S.~T. Buccola.
\newblock Treatment effects without multicollinearity? {Temporal} order and the
  {Gram}-{Schmidt} process in causal inference, Jan. 2025.
\newblock URL \url{http://arxiv.org/abs/2402.17103}.
\newblock arXiv:2402.17103 [econ].

\bibitem[Farebrother(1974)]{farebrother}
R.~W. Farebrother.
\newblock Algorithm {AS} 79: {Gram}-{Schmidt} {Regression}.
\newblock \emph{Applied Statistics}, 23\penalty0 (3):\penalty0 470, 1974.
\newblock ISSN 0035-9254.
\newblock \doi{10.2307/2347151}.
\newblock URL \url{https://www.jstor.org/stable/2347151?origin=crossref}.
\newblock Publisher: JSTOR.

\bibitem[Forina et~al.(2007)Forina, Lanteri, Casale, and
  Cerrato~Oliveros]{forina}
M.~Forina, S.~Lanteri, M.~Casale, and M.~C. Cerrato~Oliveros.
\newblock Stepwise orthogonalization of predictors in classification and
  regression techniques: {An} “old” technique revisited.
\newblock \emph{Chemometrics and Intelligent Laboratory Systems}, 87\penalty0
  (2):\penalty0 252--261, June 2007.
\newblock ISSN 01697439.
\newblock \doi{10.1016/j.chemolab.2007.03.003}.
\newblock URL
  \url{https://linkinghub.elsevier.com/retrieve/pii/S0169743907000469}.

\bibitem[Goldberger(1972)]{goldberger}
A.~S. Goldberger.
\newblock Structural {Equation} {Methods} in the {Social} {Sciences}.
\newblock \emph{Econometrica}, 40\penalty0 (6):\penalty0 979, Nov. 1972.
\newblock ISSN 00129682.
\newblock \doi{10.2307/1913851}.
\newblock URL \url{https://www.jstor.org/stable/1913851?origin=crossref}.

\bibitem[Halawa and El~Bassiouni(2000)]{halawa}
A.~Halawa and M.~El~Bassiouni.
\newblock Tests of regression coefficients under ridge regression models.
\newblock \emph{Journal of Statistical Computation and Simulation}, 65\penalty0
  (1-4):\penalty0 341--356, Jan. 2000.
\newblock ISSN 0094-9655, 1563-5163.
\newblock \doi{10.1080/00949650008812006}.
\newblock URL
  \url{http://www.tandfonline.com/doi/abs/10.1080/00949650008812006}.

\bibitem[Hansen(2022)]{hansen}
B.~E. Hansen.
\newblock A {Modern} {Gauss}–{Markov} {Theorem}.
\newblock \emph{Econometrica}, 90\penalty0 (3):\penalty0 1283--1294, 2022.
\newblock ISSN 1468-0262.
\newblock \doi{10.3982/ECTA19255}.
\newblock URL \url{https://onlinelibrary.wiley.com/doi/abs/10.3982/ECTA19255}.
\newblock \_eprint: https://onlinelibrary.wiley.com/doi/pdf/10.3982/ECTA19255.

\bibitem[Hoerl and Kennard(1970)]{hoerl}
A.~E. Hoerl and R.~W. Kennard.
\newblock Ridge {Regression}: {Biased} {Estimation} for {Nonorthogonal}
  {Problems}.
\newblock \emph{Technometrics}, 12\penalty0 (1):\penalty0 55--67, Feb. 1970.
\newblock ISSN 0040-1706.
\newblock \doi{10.1080/00401706.1970.10488634}.
\newblock URL
  \url{https://www.tandfonline.com/doi/abs/10.1080/00401706.1970.10488634}.
\newblock Publisher: ASA Website \_eprint:
  https://www.tandfonline.com/doi/pdf/10.1080/00401706.1970.10488634.

\bibitem[Hsieh et~al.(1998)Hsieh, Bloch, and Larsen]{hsieh}
F.~Y. Hsieh, D.~A. Bloch, and M.~D. Larsen.
\newblock A simple method of sample size calculation for linear and logistic
  regression.
\newblock \emph{Statistics in Medicine}, 17\penalty0 (14):\penalty0 1623--1634,
  July 1998.
\newblock ISSN 0277-6715, 1097-0258.
\newblock
  \doi{10.1002/(SICI)1097-0258(19980730)17:14<1623::AID-SIM871>3.0.CO;2-S}.
\newblock URL
  \url{https://onlinelibrary.wiley.com/doi/10.1002/(SICI)1097-0258(19980730)17:14<1623::AID-SIM871>3.0.CO;2-S}.

\bibitem[King and Smith(1986)]{king}
M.~L. King and M.~D. Smith.
\newblock Joint one-sided tests of linear regression coefficients.
\newblock \emph{Journal of Econometrics}, 32\penalty0 (3):\penalty0 367--383,
  Aug. 1986.
\newblock ISSN 03044076.
\newblock \doi{10.1016/0304-4076(86)90020-5}.
\newblock URL
  \url{https://linkinghub.elsevier.com/retrieve/pii/0304407686900205}.

\bibitem[Klein et~al.(1997)Klein, Randić, Babić, Lucić, Nikolić, and
  Trinajstić]{klein}
D.~J. Klein, M.~Randić, D.~Babić, B.~Lucić, S.~Nikolić, and N.~Trinajstić.
\newblock Hierarchical orthogonalization of descriptors.
\newblock \emph{International Journal of Quantum Chemistry}, 63\penalty0
  (1):\penalty0 215--222, 1997.
\newblock ISSN 0020-7608, 1097-461X.
\newblock \doi{10.1002/(sici)1097-461x(1997)63:1<215::aid-qua22>3.0.co;2-9}.
\newblock URL
  \url{https://onlinelibrary.wiley.com/doi/10.1002/(SICI)1097-461X(1997)63:1<215::AID-QUA22>3.0.CO;2-9}.
\newblock Publisher: Wiley.

\bibitem[Langou(2009)]{laplace}
J.~Langou.
\newblock Translation and modern interpretation of {Laplace}'s {Théorie}
  {Analytique} des {Probabilités}, pages 505-512, 516-520, July 2009.
\newblock URL \url{http://arxiv.org/abs/0907.4695}.
\newblock arXiv:0907.4695 [math].

\bibitem[Lehmann and Romano(2022)]{lehmann}
E.~Lehmann and J.~P. Romano.
\newblock \emph{Testing {Statistical} {Hypotheses}}.
\newblock Springer {Texts} in {Statistics}. Springer International Publishing,
  Cham, 2022.
\newblock ISBN 978-3-030-70577-0 978-3-030-70578-7.
\newblock \doi{10.1007/978-3-030-70578-7}.
\newblock URL \url{https://link.springer.com/10.1007/978-3-030-70578-7}.

\bibitem[Liu(2003)]{liu}
K.~Liu.
\newblock Using {Liu}-{Type} {Estimator} to {Combat} {Collinearity}.
\newblock \emph{Communications in Statistics - Theory and Methods}, 32\penalty0
  (5):\penalty0 1009--1020, Jan. 2003.
\newblock ISSN 0361-0926.
\newblock \doi{10.1081/STA-120019959}.
\newblock URL \url{https://doi.org/10.1081/STA-120019959}.
\newblock Publisher: Taylor \& Francis \_eprint:
  https://doi.org/10.1081/STA-120019959.

\bibitem[McDonald and Schwing(1973)]{mcdonald}
G.~C. McDonald and R.~C. Schwing.
\newblock Instabilities of {Regression} {Estimates} {Relating} {Air}
  {Pollution} to {Mortality}.
\newblock \emph{Technometrics}, 15\penalty0 (3):\penalty0 463--481, Aug. 1973.
\newblock ISSN 0040-1706.
\newblock \doi{10.1080/00401706.1973.10489073}.
\newblock URL
  \url{https://www.tandfonline.com/doi/abs/10.1080/00401706.1973.10489073}.
\newblock Publisher: ASA Website \_eprint:
  https://www.tandfonline.com/doi/pdf/10.1080/00401706.1973.10489073.

\bibitem[Pearl(2000)]{pearl}
J.~Pearl.
\newblock \emph{Causality}.
\newblock Cambridge University Press, Cambridge, U.K. ; New York, 2000.
\newblock ISBN 978-0-521-89560-6.

\bibitem[Perez-Melo and Kibria(2020)]{perez-melo}
S.~Perez-Melo and B.~M.~G. Kibria.
\newblock On {Some} {Test} {Statistics} for {Testing} the {Regression}
  {Coefficients} in {Presence} of {Multicollinearity}: {A} {Simulation}
  {Study}.
\newblock \emph{Stats}, 3\penalty0 (1):\penalty0 40--55, Mar. 2020.
\newblock ISSN 2571-905X.
\newblock \doi{10.3390/stats3010005}.
\newblock URL \url{https://www.mdpi.com/2571-905X/3/1/5}.

\bibitem[Portnoy(2022)]{portnoy}
S.~Portnoy.
\newblock Linearity of {Unbiased} {Linear} {Model} {Estimators}.
\newblock \emph{The American Statistician}, 76\penalty0 (4):\penalty0 372--375,
  Oct. 2022.
\newblock ISSN 0003-1305, 1537-2731.
\newblock \doi{10.1080/00031305.2022.2076743}.
\newblock URL
  \url{https://www.tandfonline.com/doi/full/10.1080/00031305.2022.2076743}.

\bibitem[Pötscher and Preinerstorfer(2023)]{potscher}
B.~M. Pötscher and D.~Preinerstorfer.
\newblock A {Modern} {Gauss}-{Markov} {Theorem}? {Really}?, Oct. 2023.
\newblock URL \url{http://arxiv.org/abs/2203.01425}.
\newblock arXiv:2203.01425 [math].

\bibitem[Randic et~al.(2016)Randic, Novic, and Plavsic]{randic.book}
M.~Randic, M.~Novic, and D.~Plavsic.
\newblock \emph{Solved and {Unsolved} {Problems} of {Structural} {Chemistry}}.
\newblock CRC Press, 0 edition, Apr. 2016.
\newblock ISBN 978-0-429-18163-4.
\newblock \doi{10.1201/b19046}.
\newblock URL \url{https://www.taylorfrancis.com/books/9781498711524}.

\bibitem[Randić(2019)]{randic}
M.~Randić.
\newblock Mathematical chemistry illustrations: a personal view of less known
  results.
\newblock \emph{Journal of Mathematical Chemistry}, 57\penalty0 (1):\penalty0
  280--314, Jan. 2019.
\newblock ISSN 0259-9791, 1572-8897.
\newblock \doi{10.1007/s10910-018-0951-0}.
\newblock URL \url{http://link.springer.com/10.1007/s10910-018-0951-0}.

\bibitem[Schwartz et~al.(2018)Schwartz, Fong, and Zanobetti]{schwartz}
J.~Schwartz, K.~Fong, and A.~Zanobetti.
\newblock A {National} {Multicity} {Analysis} of the {Causal} {Effect} of
  {Local} {Pollution}, {NO2}, and {PM2}.5 on {Mortality}.
\newblock \emph{Environmental Health Perspectives}, 126\penalty0 (8):\penalty0
  087004, Aug. 2018.
\newblock ISSN 0091-6765, 1552-9924.
\newblock \doi{10.1289/EHP2732}.
\newblock URL \url{https://ehp.niehs.nih.gov/doi/10.1289/EHP2732}.

\bibitem[Zhang(2024)]{zhang}
J.~Zhang.
\newblock Uniformly most powerful tests under weak restrictions.
\newblock \emph{Statistical Papers}, 65\penalty0 (4):\penalty0 2211--2220, June
  2024.
\newblock ISSN 0932-5026, 1613-9798.
\newblock \doi{10.1007/s00362-023-01479-0}.
\newblock URL \url{https://link.springer.com/10.1007/s00362-023-01479-0}.

\end{thebibliography}

\end{document}